%%
%% Copyright 2007, 2008, 2009 Elsevier Ltd
%%
%% This file is part of the 'Elsarticle Bundle'.
%% ---------------------------------------------
%%
%% It may be distributed under the conditions of the LaTeX Project Public
%% License, either version 1.2 of this license or (at your option) any
%% later version.  The latest version of this license is in
%%    http://www.latex-project.org/lppl.txt
%% and version 1.2 or later is part of all distributions of LaTeX
%% version 1999/12/01 or later.
%%
%% The list of all files belonging to the 'Elsarticle Bundle' is
%% given in the file `manifest.txt'.
%%

%% Template article for Elsevier's document class `elsarticle'
%% with numbered style bibliographic references
%% SP 2008/03/01

\documentclass[preprint,12pt]{elsarticle}

%% Use the option review to obtain double line spacing
%% \documentclass[authoryear,preprint,review,12pt]{elsarticle}

%% Use the options 1p,twocolumn; 3p; 3p,twocolumn; 5p; or 5p,twocolumn
%% for a journal layout:
%% \documentclass[final,1p,times]{elsarticle}
%% \documentclass[final,1p,times,twocolumn]{elsarticle}
%% \documentclass[final,3p,times]{elsarticle}
%% \documentclass[final,3p,times,twocolumn]{elsarticle}
%% \documentclass[final,5p,times]{elsarticle}
%% \documentclass[final,5p,times,twocolumn]{elsarticle}

%% For including figures, graphicx.sty has been loaded in
%% elsarticle.cls. If you prefer to use the old commands
%% please give \usepackage{epsfig}

%% The amssymb package provides various useful mathematical symbols
\usepackage{amssymb}
\usepackage{amsmath}
\usepackage{amsthm}
\usepackage{diffcoeff}

\usepackage{color}
\usepackage{pdflscape}
\usepackage{listings}

\lstset{language=Mathematica}
\lstset{basicstyle={\sffamily\footnotesize},
	numbers=none,%left
	numberstyle=\tiny\color{gray},
	numbersep=5pt,
	breaklines=true,
	captionpos={t},
	frame={lines},
	rulecolor=\color{black},
	framerule=0.5pt,
	columns=flexible,
	tabsize=2
}
%\lstloadlanguages{%[5.2]Mathematica}

%\usepackage[notcite,notref]{showkeys}%nazwy odwolan na marginesie
%% The amsthm package provides extended theorem environments
%% \usepackage{amsthm}

%% The lineno packages adds line numbers. Start line numbering with
%% \begin{linenumbers}, end it with \end{linenumbers}. Or switch it on
%% for the whole article with \linenumbers.
%% \usepackage{lineno}

\newtheorem{rem}{Remark}
\newtheorem{lemma}{Lemma}
\newtheorem{theorem}{Theorem}

\journal{ }

\begin{document}

\begin{frontmatter}

%% Title, authors and addresses

%% use the tnoteref command within \title for footnotes;
%% use the tnotetext command for theassociated footnote;
%% use the fnref command within \author or \address for footnotes;
%% use the fntext command for theassociated footnote;
%% use the corref command within \author for corresponding author footnotes;
%% use the cortext command for theassociated footnote;
%% use the ead command for the email address,
%% and the form \ead[url] for the home page:
%% \title{Title\tnoteref{label1}}
%% \tnotetext[label1]{}
%% \author{Name\corref{cor1}\fnref{label2}}
%% \ead{email address}
%% \ead[url]{home page}
%% \fntext[label2]{}
%% \cortext[cor1]{}
%% \address{Address\fnref{label3}}
%% \fntext[label3]{}

\title{Lattice sums for polyanalytic functions}

%% use optional labels to link authors explicitly to addresses:
%% \author[label1,label2]{}
%% \address[label1]{}
%% \address[label2]{}

\author{Piotr Dryga\'s, V. Mityushev}

\address{Faculty of Mathematics and Natural Sciences,\\
University of Rzeszow\\
 Pigonia 1,
35-959 Rzeszow, Poland\\
drygaspi@ur.edu.pl}

\address{Faculty of Math., Phys. and Technology, Pedagogical University, \\ ul. Podchorazych 2, Krakow 30-084, Poland \\ mityu@up.krakow.pl}

\begin{abstract}
In 1892, Lord Rayleigh estimated the effective conductivity of rectangular arrays of disks and proved, by means of the  Eisenstein summation, that the lattice sum $S_2$ is equal to $\pi$ for the square array. Further, it became  clear that such an equality can be treated as a necessary condition of the macroscopic isotropy of composites governed by the Laplace equation. This yielded the description of two-dimensional conducting composites by the classic elliptic functions including the conditionally convergent Eisenstein series. This paper is devoted to extension of the lattice sums to double periodic  polyanalytic functions. The exact relations and computationally effective formulae between the polyanalytic and classic lattice sums are established. Polynomial representations of the lattice sums are obtained. They are a source of new exact formulae for the lattice sums where the number $\pi$ persists for macroscopically isotropic composites by our suggestion.

\end{abstract}

\begin{keyword}
%% keywords here, in the form: keyword \sep keyword
Lattice sum, Eisenstein series, Natanzon-Filshtinsky series, elliptic functions, square lattice, hexagonal lattice, polyanalytic function, complete elliptic integrals
%% PACS codes here, in the form: \PACS code \sep code

%% MSC codes here, in the form: \MSC code \sep code
%% or \MSC[2008] code \sep code (2000 is the default)

\end{keyword}

\end{frontmatter}

\section{Introduction}
Various lattice sums are widely used in the study of the mechanical properties of crystals and of the optical properties of regular lattices of atoms \cite{Bor}.
The classical theory of elliptic functions \cite{Akhiezer} can be considered as the theory of doubly periodic analytic (meromorphic) functions or the theory of functions on torus \cite{Freitag}. The 2D lattice sums is an element of this theory. This paper is devoted to extension of the lattice sums to doubly periodic  polyanalytic functions.

Let a doubly periodic lattice on the complex plane be determined by two fundamental translation vectors expressed by the complex numbers $\omega_1$, $\omega_2$. Without loss of generality we may assume that $\omega_1>0$ and  $\mathrm{Im}\tau>0$ where $\tau=\frac{\omega_2}{\omega_1}$. Let the area of each cell be normalized to unity, i.e., $\omega_1^2\mathrm{Im}\tau=1$. Though, the assumptions on $\omega_1$ and the area partly restrict application of the elliptic modular group $SL(2,\mathbb Z)$ \cite[Sec.V.7]{Freitag} to extension of the formulae derived in the present paper, they are natural in the theory of composites and explicitly demonstrate the role of the number $\pi$ in description of isotropic lattices.

The {\it classic lattice} sums are introduced as follows
\begin{equation}
\label{sk}
S_q(\omega_1,\omega_2)={\sum_{m,n}}^{'} \frac{1}{(m\omega_1+n\omega_2)^q} = (\mathrm{Im}\;
\tau)^{\frac{q}2} {\sum_{m,n}}^{'} \frac{1}{(m+n\tau)^q}.
\end{equation}
The symbol $\sum^{'}_{m,n}$ means that $m$ and $n$ run over all integer numbers, except the pair $m = n = 0$. The series $S_q(\omega_1,\omega_2) $ we denote by $S_q(\tau)$ or $S_q$ for short.
The series \eqref{sk} are absolutely convergent for $q>2$ and conditionally convergent for $q=1, 2$ \cite{Weil}. The classic Weierstrass functions are expanded into the Laurent series near zero
\begin{equation}
\label{eq:zetaseries}
\zeta(z)= \frac{1}{z}-\sum_{l=2}^\infty S_{2l}z^{2l+1},
\quad
\wp(z)= \frac{1}{z^2}+\sum_{l=2}^\infty(2l+1) S_{2l}z^{2l}.
\end{equation}
The conditionally convergent sum $S_2$ can be defined by the Eisenstein summation method \cite{Weil}
\begin{equation}
\label{eisen_sum}
{\sum_{m,n}}^{(e)}:=\lim_{N\rightarrow\infty}\sum_{n=-N}^{N}
\left(\lim_{M\rightarrow\infty}\sum_{m=-M}^{M}\right).
\end{equation}
The Eisenstein functions of first and second order are introduced by the summation \eqref{eisen_sum} and can be expanded into the series \cite{Weil}
\begin{equation}
\label{eq:E12}
E_1(z)= \frac{1}{z}-\sum_{l=1}^\infty S_{2l}z^{2l+1},
\quad
E_2(z)= \frac{1}{z^2}+\sum_{l=1}^\infty(2l+1) S_{2l}z^{2l}.
\end{equation}

Rayleigh \cite{lordrayleigh} investigated doubly periodic problems for harmonic functions with one circular  inclusion per periodicity cell and applied them to calculation of the effective conductivity. Rayleigh \cite{lordrayleigh} used the Eisenstein summation and proved that for rectangular lattices
\begin{equation}
\label{eq:S2R}
S_2=\pi^2 \mathrm{Im}\; \tau \left[ \frac{1}{3}+2\sum\limits_{m=1}^\infty\frac{1}{\sin^2 m\pi \tau}\right].
\end{equation}
Rayleigh \cite{lordrayleigh} demonstrated that $S_2=\pi$ for the square array when $\tau=i$. The same formula $S_2=\pi$ holds for the hexagonal (triangular) array when $\tau=\exp(\frac{\pi i}3)$. It is worth noting that only the square and hexagonal arrays form macroscopically isotropic conducting regular composites.

We introduce the {\it lattice sums}
\begin{equation}
\label{eq:Spq}
{S}_{q}^{(p)}(\omega_1,\omega_2)=
{\sum_{m,n}}^{'} \frac{\overline{(m\omega_1+n\omega_2)}^p}{(m\omega_1+n\omega_2)^q} = (\mathrm{Im}\;
\tau)^{\frac{q-p}2} {\sum_{m,n}}^{'} \frac{\overline{(m+n\tau)}^p}{(m+n\tau)^q}.
\end{equation}
%Denote by
%$$
%W_{q}^{(p)}(\tau)={\sum_{m,n}}^{'} %\frac{\overline{(m+n\tau)}^p}{(m+n\tau)^q}.
%$$
We use the following terminology: \eqref{sk} are called the {\it classic lattice sums} and  \eqref{eq:Spq} the {\it $p$-analytic or polyanalytic lattice sums}.
A theory of bianalytic doubly periodic functions having applications to 2D elastic problems was developed in \cite{natanzon, fil70, fil92, Yakubovich}. Similar to the expansions \eqref{sk} of the Weierstrass functions  the main bianalytic doubly periodic functions can be expanded into series including the lattice sums ${S}_{q}^{(1)}(\omega_1,\omega_2)$. The sum ${S}_{3}^{(1)}(\omega_1,\omega_2)$ is conditionally convergent and can be defined by the Eisenstein summation which yields \cite[formula (2.10)]{Yakubovich}
\begin{equation}
\label{eq:S13}
{S}_{3}^{(1)}(\omega_1,\omega_2)=
S_2(\omega_1,\omega_2)-4\pi^3 i (\mathrm{Im}\; \tau)^2\sum\limits_{m=1}^\infty m \frac{\cos (m\pi \tau)}{\sin^3 (m\pi \tau)}.
\end{equation}
It was proved in \cite{Yakubovich} that ${S}_{3}^{(1)}=\frac{\pi}2$ for the hexagonal array which is the unique regular periodic structure for 2D elastic composite. It is worth noting that the square array does not form an isotropic elastic structure and ${S}_{3}^{(1)}=\frac{\pi}2+ \frac{\Gamma(1/4)}{384 \pi^3}$ in this case \cite[formula (4.19)]{Yakubovich}.

In the present paper, we study the $p$-analytic lattice sums \eqref{eq:Spq} associated to the $p$-analytic functions. %studied for $p=2$ in the previous works.
A computationally effective formula \eqref{eq:eps4ca} for the $p$-analytic lattice sums \eqref{eq:Spq} is derived. As the particular formulae \eqref{eq:S2R} and \eqref{eq:S13}, formula \eqref{eq:eps4ca} is based on the trigonometric series, first arisen in the works due to Eisenstein \cite{Weil} and further in \cite{Zucker}, see for extended references \cite{Bor, Yakubovich}.
A special attention is paid to the conditionally convergent sums ${S}_{p+2}^{(p)}$. A simple formula for some of ${S}_{p+2}^{(p)}$ based on numerical computations is suggested. In particular cases, it coincides with the known formulae ${S}_{2}^{(0)} = \pi$ and ${S}_{3}^{(1)} = \frac{\pi}2$. The expression \eqref{s1krecurs} of the $p$-analytic lattice sums ${S}_{q}^{(1)}$ through the classic lattice sums $S_{q}$ is established.

\section{Expression of $p$-analytic sums through the classic lattice sums}
\label{sec31}
Consider the $p$-analytic lattice sums \eqref{eq:Spq} for $q-p\ge 2$. If $q-p>2$ the series absolutely converges. In the case $q=p+2$ we arrive at the conditionally convergent series ${S}_{p+2}^{(p)}$. The Eisenstein summation method will be applied to ${S}_{p+2}^{(p)}$.

First, consider the function
\begin{equation}
\label{eq:eps0}
\varepsilon_0(\tau)=\ln\;\sin(\pi \tau)
\end{equation}
and the functions
\begin{equation}
\label{eq:eps1}
\varepsilon_l(\tau):=\sum\limits_{m}(j+\tau)^{-l}, \quad l=1,2,\ldots.
\end{equation}
The $n$-th derivative of $\varepsilon_0$ satisfies equation \cite{Weil}
\begin{equation}
\label{eq:eps3}
\frac{d^{n}}{d\tau^{n}}\varepsilon_0(\tau)={(-1)^{n+1}}{(n-1)!}\;\varepsilon_{n}(\tau).
\end{equation}
The following formula was proved in \cite{PD2016}
\begin{equation}
\label{numT}
S_q^{(1)}= {S_{q-1}}+2i \frac{\mathrm{Im}\; \omega_2}{\omega_1^{q}}\frac{(-1)^{q}}{(q-1)!} \sum\limits_{m} m
\frac{d^q \varepsilon_0 (x)}{dx^q}\Big|_{x=k\tau},\quad q>2.
%\left(\diff*[q]{\varepsilon_0(x)}{x}{x=k\tau}\right),
\end{equation}
%where $q>2$.

We now proceed to extend this formula to the series \eqref{eq:Spq}. Due to assumptions about translation vectors $\omega_1$, $\omega_2$ the expression ${\overline{(m+n\tau)}^p}$ is equal to ${(m+n\overline{\tau})^p}$. Using  the binomial formula and adding and subtracting coefficient with nonconjugate $\tau$ we have
\begin{multline}
\label{eq:eps4}
S_q^{(p)}=
\sum_{m,n}\frac{1}{(m\omega_1+n\omega_2)^{q-p}}
\\
-2 i (\mathrm{Im}\;\tau)^{\frac{q-p}{2}}\sum_{s=1}^p\binom{p}{s}\mathrm{Im}\;(\tau^s)\sum_n n^s\sum_{m} \frac{m^{p-s}}{(m+n\tau)^q},
%-\sum_{s=1}^p\binom{p}{s}\omega_1^{p-q-s}(\omega_2^s-\overline{\omega}_2^s)\sum_k k^s\sum_{j} \frac{j^{p-s}}{(j+k\tau)^q},
\end{multline}
where $\binom{p}{s}$ denotes the binomial coefficient.
Using the relation
\begin{equation}
\label{eq:relj}
\sum_{m} \frac{m^r}{(m+n\tau)^q}=\sum_{s=0}^r(-1)^s\binom{r}{s}(n\tau)^s\sum_{m} \frac{1}{(m+n\tau)^{q-r+s}}
\end{equation}
we obtain
\begin{multline}
\label{eq:eps4a}
S_q^{(p)}=S_{q-p}
\\
-2i (\mathrm{Im}\;\tau)^{\frac{q-p}{2}}\sum_{s=1}^p\sum_{r=0}^{p-s} (-1)^r\binom{p}{s}\binom{p-s}{r}\tau^r\mathrm{Im}\;(\tau^s)\sum_{n} n^{s+r}\sum_{m} \frac{1}{(m+n\tau)^{q-p+s+r}}.
\end{multline}
Introduction of the  summation index $t=s+r$ yields
\begin{multline}
\label{eq:eps4b}
S_q^{(p)}=S_{q-p}
\\
 - 2 i (\mathrm{Im}\;\tau)^{\frac{q-p}{2}}\sum_{s=1}^p\sum_{t=s}^{p} (-1)^{t-s}\binom{p}{s}\binom{p-s}{t-s}\tau^{t-s}\mathrm{Im}\;(\tau^s)\sum_{n} n^{t}\sum_{m} \frac{1}{(m+n\tau)^{q-p+t}}
\end{multline}
Selecting the terms with the same values of $\sum_{n} n^{t}\sum_{m} \frac{1}{(m+n\tau)^{q-p+t}} $ we obtain
\begin{multline}
\label{eq:eps4c}
S_q^{(p)}=S_{q-p} \\
- 2i (\mathrm{Im}\;\tau)^{\frac{q-p}{2}}\sum_{t=1}^p\left(\sum_{s=1}^{t} (-1)^{t-s}\binom{p}{s}\binom{p-s}{t-s}\tau^{t-s}\mathrm{Im}\;(\tau^s)\right)\sum_{n} n^{t}\sum_{m} \frac{1}{(m+n\tau)^{q-p+t}}.
\end{multline}
Using \eqref{eq:eps3} we arrive at the formula for $q-p \geq 2$
\begin{multline}
\label{eq:eps4ca}
S_q^{(p)}=S_{q-p}
+ 2i (\mathrm{Im}\;\tau)^{\frac{q-p}{2}}\times
\\
\sum_{t=1}^p\left[\sum_{s=1}^{t} \frac{(-1)^{q-p-s}}{(q-p+t-1)!}\binom{p}{s}\binom{p-s}{t-s}\tau^{t-s}\mathrm{Im}\;(\tau^s)\right]\sum_{n} n^{t}
\diff*[{q-p+t}]{\varepsilon_0(x)}{x}{x=k\tau}.
\end{multline}
One can see that formula \eqref{eq:eps4ca} for $p=1$ coincides with formula \eqref{numT} from \cite{PD2016}.

The formula \eqref{eq:eps4c} is effective in numerical computations. It contains the classic lattice sums for which fast computational formulae are known \cite{GMN}. The sum $\sum_{n} n^{t}\frac{d^{q-p+t}}{d\tau^{q-p+t}}\varepsilon_0(k\tau)$ contains the derivatives of the elementary function \eqref{eq:eps0}, hence, it can be also also quickly computed. The results of numerical computations are presented in Table \ref{tab:1} for hexagonal lattice and in Table \ref{tab:1a} for square lattice. The values in the tables without the decimal points are exact.

\begin{landscape}
\begin{table}[t]
	\caption{\it{The values of $S_q^{(p)}$ for the hexagonal lattice calculated by (\ref{eq:eps4ca})} }
	\label{tab:1}
	\centering
\begin{tabular}{|c|c|c|c|c|c|c|c|c|c|c|}
	\hline
$q\setminus p$&  1 & 2 & 3 & 4 & 5 & 6 & 7 & 8 & 9 & 10 \\	\hline
3 & 1.5708 & -- & -- & -- & -- &
-- & -- & -- & -- & --
\\
4& 0 & 5.54874 & -- & -- & -- & --
& -- & -- & -- & -- \\
5 & 4.2426 & 0 & 0.785398 & -- & -- & -- &
-- & -- & -- & -- \\
6 & 0 & 0 & 0 & 0.628319 & -- & -- & -- &
-- & -- & -- \\
7 & 0 & 0 & 0 & 0 & 8.26383 & -- & -- & -- &
-- & -- \\
8 & 0 & 0 & 0 & 5.21071 & 0 & 0.448799 & -- & -- &
-- & -- \\
9 & 0 & 0 & 4.09248 & 0 & 0 & 0 & 0.392699 & -- & --
& -- \\
10 & 0 & 3.42851 & 0 & 0 & 0 & 0 & 0 & 6.6867 & -- &
-- \\
11 & 2.93754 & 0 & 0 & 0 & 0 & 0 & 4.47981 & 0 & 0.314159 &
-- \\
12 & 0 & 0 & 0 & 0 & 0 & 3.83829 & 0 & 0 & 0 & 0.285599 \\
13 & 0 & 0 & 0 & 0 & 3.34954 & 0 & 0 & 0 & 0 & 0 \\
14 & 0 & 0 & 0 & 2.91403 & 0 & 0 & 0 & 0 & 0 & 5.38661 \\
15 & 0 & 0 & 2.52844 & 0 & 0 & 0 & 0 & 0 & 4.10976 & 0 \\
16 & 0 & 2.19126 & 0 & 0 & 0 & 0 & 0 & 3.43033 & 0 & 0 \\
17 & 1.89818 & 0 & 0 & 0 & 0 & 0 & 2.93774 & 0 & 0 & 0 \\
18 & 0 & 0 & 0 & 0 & 0 & 2.53534 & 0 & 0 & 0 & 0 \\
19 & 0 & 0 & 0 & 0 & 2.19326 & 0 & 0 & 0 & 0 & 0 \\
20 & 0 & 0 & 0 & 1.89876 & 0 & 0 & 0 & 0 & 0 & 2.91342 \\
21 & 0 & 0 & 1.64418 & 0 & 0 & 0 & 0 & 0 & 2.52836 & 0 \\
22 & 0 & 1.42385 & 0 & 0 & 0 & 0 & 0 & 2.19125 & 0 & 0 \\
23 & 1.23308 & 0 & 0 & 0 & 0 & 0 & 1.89818 & 0 & 0 & 0 \\
24 & 0 & 0 & 0 & 0 & 0 & 1.64402 & 0 & 0 & 0 & 0 \\
25 & 0 & 0 & 0 & 0 & 1.42381 & 0 & 0 & 0 & 0 & 0  \\	\hline
\end{tabular}
\end{table}

\begin{table}[t]
	\caption{\it{The values of $S_q^{(p)}$ for the square lattice calculated by (\ref{eq:eps4ca})} }
	\label{tab:1a}
	\centering
	\begin{tabular}{|c|c|c|c|c|c|c|c|c|c|c|}
		\hline
		$q\setminus p$&  1 & 2 & 3 & 4 & 5 & 6 & 7 & 8 & 9 & 10 \\	\hline
3& 4.07845 & -- & -- & -- & -- &
-- & -- & -- & -- & --
\\
4& 0 & 1.0472 & -- & -- & -- & -- &
-- & -- & -- & -- \\
5& 0 & 0 & 6.79031 & -- & -- & -- &
-- & -- & -- & -- \\
6& 0 & 5.03067 & 0 & 0.628319 & -- & -- &
-- & -- & -- & -- \\
7& 4.51552 & 0 & 0 & 0 & 4.35815 & -- & -- &
-- & -- & -- \\
8& 0 & 0 & 0 & 3.44189 & 0 & 0.448799 & -- & -- &
-- & -- \\
9& 0 & 0 & 3.60434 & 0 & 0 & 0 & 8.91917 & -- & --
& -- \\
10& 0 & 3.77445 & 0 & 0 & 0 & 5.49471 & 0 & 0.349066 & -- &
-- \\
11& 3.88073 & 0 & 0 & 0 & 4.60323 & 0 & 0 & 0 & 2.82712 & --
\\
12& 0 & 0 & 0 & 4.27271 & 0 & 0 & 0 & 3.03028 & 0 & 0.285599 \\
13& 0 & 0 & 4.13019 & 0 & 0 & 0 & 3.50734 & 0 & 0 & 0 \\
14& 0 & 4.06372 & 0 & 0 & 0 & 3.7536 & 0 & 0 & 0 & 5.46435 \\
15& 4.03154 & 0 & 0 & 0 & 3.87642 & 0 & 0 & 0 & 4.58835 & 0 \\
16& 0 & 0 & 0 & 3.93797 & 0 & 0 & 0 & 4.26919 & 0 & 0 \\
17 & 0 & 0 & 3.96889 & 0 & 0 & 0 & 4.12945 & 0 & 0 & 0 \\
18& 0 & 3.98442 & 0 & 0 & 0 & 4.06357 & 0 & 0 & 0 & 3.77621 \\
19& 3.9922 & 0 & 0 & 0 & 4.03151 & 0 & 0 & 0 & 3.88111 & 0 \\
20& 0 & 0 & 0 & 4.01569 & 0 & 0 & 0 & 3.93893 & 0 & 0 \\
21& 0 & 0 & 4.00783 & 0 & 0 & 0 & 3.96909 & 0 & 0 & 0 \\
22& 0 & 4.00391 & 0 & 0 & 0 & 3.98445 & 0 & 0 & 0 & 4.06317 \\
23& 4.00195 & 0 & 0 & 0 & 3.99221 & 0 & 0 & 0 & 4.03143 & 0 \\
24& 0 & 0 & 0 & 3.9961 & 0 & 0 & 0 & 4.01567 & 0 & 0 \\
25& 0 & 0 & 3.99805 & 0 & 0 & 0 & 4.00783 & 0 & 0 & 0 \\	\hline
	\end{tabular}
\end{table}

% Using relation $2\mathrm{Im}(\omega_2)\omega_2-  \mathrm{Im}(\omega_2^2) = 2i(\mathrm{Im}(\omega_2))^2$  we have analogously to \eqref{numT} formula
%\begin{multline}
%\label{numT2}
%S_q^{(2)}= {S_{q-2}}-4i \frac{\mathrm{Im} \omega_2}{\omega_1^{q-1}}\frac{(-1)^{q}}{(q-2)!} \sum\limits_{k} k \frac{d^{q-1}}{d\tau^{q-1}}\varepsilon_0(k\tau) \\
%+4 \frac{(\mathrm{Im} \omega_2)^2}{\omega_1^{q}}\frac{(-1)^{q}}{(q-1)!} \sum\limits_{k} k^2 \frac{d^{q}}{d\tau^{q}}\varepsilon_0(k\tau).
%\end{multline}

\begin{table}[t]
	\caption{\it{The values of $\frac{1}{\pi}{S}_{p+2}^{(p)}$} for the hexagonal lattice}
	\label{tab:3}
	\centering
	\begin{tabular}{|l||l|l|l|l|l|l|l|l|l|l|l|}
		\hline
		$p$ &  0	&	1	&	2	&	3	&	4&	5	&6&
		7 & 8 & 9 & 10 \\ \hline %\hline
$\frac{{S}_{p+2}^{(p)}}{\pi}$ &
 	1	&  $\frac{1}{2}$ & 1.76622 & $\frac{1}{4}$& $\frac{1}{5}$ & 2.63046 & $\frac{1}{7}$	& $\frac{1}{8}$ &
 	 2.12844 & $\frac{1}{10}$ & $\frac{1}{11}$\\ \hline\hline
 	 	$p$ &  11	&	12	&	13	&	14	&	15&	16	&17&
 	 18 & 19 & 20 & 21 \\ \hline
 	 $\frac{{S}_{p+2}^{(p)}}{\pi}$ &
 	 3.15788	& $\frac{1}{13}$  &$\frac{1}{14}$  & 1.72012 & $\frac{1}{16}$ & $\frac{1}{17}$ & 3.58714 	& $\frac{1}{19}$  & $\frac{1}{20}$
 	  &1.85034   & $\frac{1}{22}$\\ \hline
	\end{tabular}
\end{table}

\begin{table}[b]
	\caption{\it{The values of $\frac{1}{\pi}{S}_{p+2}^{(p)}$} for the square lattice }
	\label{tab:3b}
	\centering
	\begin{tabular}{|l||l|l|l|l|l|l|l|l|l|l|l|}
		\hline
		$p$ &  0	&	1	&	2	&	3	&	4&	5	&6&
		7 & 8 & 9 & 10 \\ \hline %\hline
		$\frac{{S}_{p+2}^{(p)}}{\pi}$ &
		1	&  1.29821 & $\frac{1}{3}$  &2.16142 & $\frac{1}{5}$ & 1.38724 & $\frac{1}{7}$	& 2.83906 &
		$\frac{1}{9}$ & 0.899899 & $\frac{1}{11}$\\ \hline\hline
		$p$ &  11	&	12	&	13	&	14	&	15&	16	&17&
		18 & 19 & 20 & 21 \\ \hline
		$\frac{{S}_{p+2}^{(p)}}{\pi}$ &
	2.96621	& $\frac{1}{13}$  & 1.45425  &$\frac{1}{15}$ & 2.60551 & $\frac{1}{17}$ & 0.79325 	& $\frac{1}{19}$  & 3.69472
		   & $\frac{1}{21}$&  1.13929  \\ \hline
	\end{tabular}
\end{table}

\end{landscape}

\section{Recurrence formula for ${S}_{q}^{(1)}$ through the classic lattice sums}
\label{sec32}
The following well-known recurrence formula is useful to compute the classic lattice sums beginning from $S_4$ and $S_6$
\begin{equation}
\label{eq:weiestrcoef}
(2l-1)(2l+1)(l-3)S_{2l}=3\sum_{j=2}^{l-2} (2j-1)(2l-2j-1)S_{2j}S_{2l-2j}
\end{equation}
Fast formulae for $S_4$ and $S_6$ are known in the theory of elliptic functions \cite{Akhiezer}. In the present section, we express the lattice sums $S_{2l+3}^{(1)}$ through the classic sums by an analogous formula.

We will use the Maclaurin expansion of the Natanzon-Filshtinsky function \cite{natanzon, fil92}
\begin{equation}
\label{eq:wpseries}
\wp_{1,2}(z)= \sum_{m=2}^\infty (m+1)S_{m+2}^{(1)}z^m
\end{equation}
and the following formula established in \cite{fil92, PD2016}
\begin{equation}
\label{eq:f20}
\pi \wp_{1,2}(z)=\frac{1}{2}\wp'(z)+(\zeta(z)-(S_2-\pi) z)\wp(z)+(S_2-\pi)\zeta(z)+(5S_4+c)z+c_1.
\end{equation}
%where $c$ and $c_1$ are constants.
It is assumed here that the constants $c$ and $c_1$ are undetermined. Substituting \eqref{eq:zetaseries} and \eqref{eq:wpseries} into \eqref{eq:f20}  we arrive at the formula
\begin{multline}
\pi \sum_{m=2}^\infty (m+1)S_{m+2}^{(1)}z^m= c_1+(10S_4+c)z \\
+ \sum_{n=1}^\infty \left((2n+3)(n+2)-1\right)S_{2n+4}z^{2n+1}-(S_2-\pi) \sum_{n=1}^\infty 2nS_{2n+2}z^{2n+1} \\
-\sum_{n=2}^\infty \sum_{l=1}^{n-1} (2l+1)S_{2l+2}S_{2(n-l)+2}z^{2n+1}.
\end{multline}
Selecting coefficients at the same powers of $z$ we arrive at the following assertions. First,  the linear part $c_1+(10S_4+c)z$ vanishes, hence, $c_1=0$ and  $c=-10S_4$. Second, the sums $S_m^{(1)}$ are equal zero for even $m$. Third, the lattice sums $S_{2m+3}^{(1)}$ can be calculated through the classic sums $S_m$ by means of the following algebraic relations
\begin{multline}
\label{s1krecurs}
2\pi S_{2m+3}^{(1)} = \left(2m+5\right) S_{2m+4}- \frac{2m}{m+1} (S_2-\pi) S_{2m+2} \\
- \sum_{l=1}^{m-1} \frac{2l+1}{m+1} S_{2l+2}S_{2(m-l)+2}, \quad m=1,2,\ldots .
\end{multline}
Note, that $S_2=\pi$ for the hexagonal and square lattices. Hence, in these cases \eqref{s1krecurs}  is reduced to
\begin{equation}
\label{s1krecurshex}
2\pi S_{2m+3}^{(1)} =
\left(2m+5\right)S_{2m+4}
- \sum_{l=1}^{m-1} \frac{2l+1}{m+1} S_{2l+2}S_{2(m-l)+2}, \quad m=1,2,\ldots.
%2\pi(k+1)S_{2k+3}^{(1)} = \\ \left((2k+3)(k+2)-1\right)S_{2k+4} - \sum_{l=1}^{k-1} (2l+1)S_{2l+2}S_{2(k-l)+2},
\end{equation}

The relations \eqref{s1krecurs} and \eqref{s1krecurshex} are useful not only for fast computations, but can be applied to deduce new exact formulae. Consider for instance, the square lattice for which $S_6=0$ and \cite{Waldschmidt}
\begin{equation}
\label{eq:Wald}
S_4=\frac{\Gamma^8\left(\frac{1}{4}\right)}{2^6\cdot 3\cdot 5\pi^2},
\end{equation}
where $\Gamma(z)$ stands for Euler's gamma-function \cite[Vol. I]{erd}. It follows from \eqref{eq:weiestrcoef} that $S_8=\frac{3}{7}S_4^2$. Then, \eqref{s1krecurshex} yields the closed form expression for $S_{7}^{(1)}=\frac{10}{7\pi}S_4^2$ after the substitution \eqref{eq:Wald}, see the value in the Tab. \ref{tab:4}.
%The relation $S_{5}^{(1)} = \frac{7}{2\pi}S_6$ for the hexagonal array follows from  \eqref{s1krecurshex} by sustitution $k=1$.

\section{Expression of lattice sums through elliptic integrals}
\label{ExactValueSection}

%\textcolor{red}{W calym tekscie musze poprawic nomenklature. do tej pory $\frac{d^{q}}{d\tau^{q}}\varepsilon_0(k\tau)$ oznaczalo pochodna funkcji w punkcie $k\tau$. Wygodniej uzyc standardowego pojecia pochodnej i wtedy zamiast  $k^q \frac{d^{q}}{d\tau^{q}}\varepsilon_0(k\tau)$ mamy prosty zapis $\frac{d^{q}}{d\tau^{q}}\varepsilon_0(k\tau)$, gdzie po policzeniu pochodnej otrzymujemy $k$ w odpowiedniej potedze.}

Let $\mathbb{R}$ denote the set of real numbers and $\mathbb{R}_+$ the set of positive numbers. Following \cite{Akhiezer}, \cite[Volume II]{erd} introduce the complete elliptic integral of the first kind
\begin{equation}
\label{eq:3.2}
K(k)= \int_0^1 \frac{dt}{\sqrt{ (1-t^2) (1- k^2 t^2)} },
\end{equation}
where the elliptic modulus  $k \in (0,1)$ and the complimentary modulus $k^\prime =\sqrt{1-k^2}$ are used.   The value
\begin{equation}
\label{eq:3.1}
x \equiv x(k)=\frac{K(k')}{K(k)}
\end{equation}
can be considered as a one to one  monotonically decreasing continuously differentiable function $x: (0,1) \to \mathbb{R}_+$. Expressions for the derivative $\frac{dx}{dk}$ were implicitly used in the previous papers. Below, such a formula is explicitly written and proved for completeness of presentation.

\begin{lemma}
The derivative of the function $x(k)$ can be calculated by the formula
\begin{equation}
\frac{dx}{dk} =  - \frac{\pi}{2 k(1-k^2) K^2(k)}.
\label{eq:3.50}
\end{equation}
\end{lemma}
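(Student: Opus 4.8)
The plan is to differentiate the quotient $x(k)=K(k')/K(k)$ directly, reducing $\frac{dx}{dk}$ to the classical derivative $\frac{dK}{dk}$ and then collapsing the outcome with Legendre's relation. The only inputs needed beyond elementary calculus are the standard formula for $\frac{dK}{dk}$ and Legendre's identity, both found in \cite{Akhiezer} and \cite[Vol.~II]{erd}.

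First I would recall that, with $E(k)$ the complete elliptic integral of the second kind,
\[
\frac{dK}{dk}=\frac{E(k)-(1-k^2)K(k)}{k(1-k^2)}.
\]
Since $k'=\sqrt{1-k^2}$ gives $\frac{dk'}{dk}=-\frac{k}{k'}$ and $1-k'^2=k^2$, the chain rule applied to $K(k')$ yields
\[
\frac{d}{dk}K(k')=\frac{E(k')-k^2K(k')}{k'(1-k'^2)}\left(-\frac{k}{k'}\right)=-\frac{E(k')-k^2K(k')}{k(1-k^2)}.
\]

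Next I would insert both derivatives into the quotient rule
\[
\frac{dx}{dk}=\frac{1}{K^2(k)}\left[K(k)\,\frac{d}{dk}K(k')-K(k')\,\frac{dK}{dk}\right].
\]
Collecting terms, the bracketed factor equals
\[
-\frac{1}{k(1-k^2)}\Big[E(k')K(k)+E(k)K(k')-K(k)K(k')\Big],
\]
the two contributions proportional to $k^2K(k)K(k')$ and to $(1-k^2)K(k)K(k')$ having merged into a single $-K(k)K(k')$. By Legendre's relation $E(k)K(k')+E(k')K(k)-K(k)K(k')=\frac{\pi}{2}$ this bracket reduces to $\frac{\pi}{2}$, so that $\frac{dx}{dk}=-\frac{\pi}{2k(1-k^2)K^2(k)}$, which is \eqref{eq:3.50}.

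The argument has no genuine obstacle; the only step requiring care is the bookkeeping of the chain rule for $K(k')$ and the cancellation that puts the numerator into exactly the shape of Legendre's identity. If one preferred to avoid quoting $\frac{dK}{dk}$, an alternative would be to differentiate a theta-function (or nome) representation of $k$ and $k'$, but that route is less economical than invoking Legendre's relation.
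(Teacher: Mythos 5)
Your proof is correct and follows essentially the same route as the paper: differentiate the quotient $K(k')/K(k)$ using the standard formula for $dK/dk$ (applied also at $k'$ via the chain rule), and collapse the resulting numerator with Legendre's relation. Your version is in fact a slightly tidier bookkeeping of the paper's own computation, which passes through the intermediate expression \eqref{eq:3.35a} before invoking \eqref{eq:3.3}.
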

Proof.
The complete elliptic integral $K(k)$ satisfies  Legendre's relation
\begin{equation}
\label{eq:3.3}
E(k)K (k^\prime) + E (k^\prime)K(k)-  K (k^\prime) K(k) = \frac{\pi}{2},
\end{equation}
where $E(k)$ is the complete elliptic integral of the second kind
\begin{equation}
E(k)= \int_0^1 \sqrt{\frac{ 1- k^2 t^2}{ 1-t^2} } dt.
\label{eq:3.4}
\end{equation}
Its derivative can be calculated by formula
\begin{equation}
\frac{dE}{dk} = \frac{E(k)- K(k)}{k}.
\label{eq:3.5}
\end{equation}
The pair of the functions $K(k)$ and $K(k^\prime)$ satisfy  the differential equation \cite{wit}
\begin{equation}
\frac{d}{dk } \left( k (k^\prime)^2 \frac{du}{dk} \right)= k u.
\label{eq:3.6}
\end{equation}
The functions $E(k)$ and $E(k^\prime)- K(k^\prime)$ satisfy  the other differential equation \cite{wit}
\begin{equation}
(k^\prime)^2 \frac{d}{dk } \left( k  \frac{du}{ dk}\right) + k u =0.
\label{eq:3.7}
\end{equation}
Then, the derivative of $K(k)$ can be calculated by the formula
\begin{equation}
\frac{dK}{dk} =  \frac{E(k)-   (k^\prime)^2 K(k)}{ k (k^\prime)^2} .
\label{eq:3.8}
\end{equation}
It follows from Legendre's relation \eqref{eq:3.3} that
\begin{equation}
\label{eq:3.8p}
K(k^\prime)=\frac{\frac{\pi}{2}-E(k^\prime)K(k)}{(E(k)-K(k))K(k)}.
\end{equation}
Differentiation of \eqref{eq:3.1} and using the expressions on the derivatives of the elliptic integrals yield

\begin{equation}
%\begin{multline}
\label{eq:3.35a}
\frac{dx}{dk} =
%-  \frac{K(k^\prime)} {K^2(k)} \frac{dK(k)}{dk} -  \frac{k } {(1-k^2) K(k)} \left[ \frac{E (k^\prime)} {k^2} - K(k^\prime)\right] =\\
\frac{1} {k(1-k^2) K(k)} \left[ K(k^\prime) \left[ 1 -  \frac{E(k)} {K(k)} \right] -   E (k^\prime)\right].
%\end{multline}
\end{equation}
Application of Legendre's identity \eqref{eq:3.3} reduces \eqref{eq:3.35a} to \eqref{eq:3.50}.

The lemma is proved.
%\textcolor{red}{ten lemat chyba jest udowodniony w  naszym artykule  z yakubowitzem }\newline

We will use the following Rayleigh formulae \cite{lordrayleigh} written in the form \cite{Yakubovich}
	\begin{equation}
	\label{eq:3.35}
	S_2(ix)=  \frac{4}{3}\  K(k^\prime) \left[ 3E(k) + (k^2- 2) K(k) \right],
	\end{equation}
	\begin{equation}
	\label{eq:3.36}
	S_2\left( \frac{\pm 1+ix}{2} \right) =  2 \   K(k^\prime) \left[ 2\  E(k)+  \frac{ 4k^2 -5}{3} \  K(k)\right], \quad x>0.
	\end{equation}

Consider now \eqref{eq:eps4c} with $q=p+2$
\begin{multline}
\label{Eq:Sp2}
S_{p+2}^{(p)}(\tau)=S_2(\tau)\\
-2i\mathrm{Im}\;\tau \sum_{t=1}^p\left( \sum_{s=1}^t (-1)^{t-s}\binom{p}{s}\binom{p-s}{t-s}\tau^{t-s}\mathrm{Im}\;(\tau^s)\right)\sum_{n}n^t\sum_m\frac{1}{(m+n\tau)^{t+2}}.
\end{multline}
The formula \eqref{Eq:Sp2} for $p=1$ becomes
\begin{equation}
\label{Eq:S31a}
S_3^{(1)}(\tau)= S_2(\tau) -2i(\mathrm{Im}\;\tau)^2\sum_{n}n\sum_m\frac{1}{(m+n\tau)^{3}}.
\end{equation}
Using \eqref{Eq:S31a} we can write \eqref{Eq:Sp2} for $p=2$ in the form
\begin{equation}
S_4^{(2)}(\tau)= 2S_3^{(1)}(\tau)-S_2(\tau) -2i\mathrm{Im}\;\tau(\mathrm{Im}\;(\tau^2)-2\tau\mathrm{Im}\;\tau)\sum_{n}n^2\sum_m\frac{1}{(m+n\tau)^{4}}.
\end{equation}
 This recurrence can be continued and the values $S_{p+2}^{(p)}(\tau)$ can be found by means of the previous ones $S_{l+2}^{(l)}(\tau)$ ($l=0,1,\ldots,p-1$) and by the absolutely convergent series $\sum_{n}n^p\sum_m\frac{1}{(m+n\tau)^{p+2}}$.
This series can be differentiated term by term
\begin{equation*}
\diff*[]{\sum_{n}n^{t-1}\sum_m\frac{1}{(m+n\tau)^{t+1}}}{\tau}=-\frac{1}{t+1}\sum_{n}n^{t}\sum_m\frac{1}{(m+n\tau)^{t+2}}
\end{equation*}

\begin{theorem}
	\label{th:polyn}
Let $\tau=\frac{1+ix}{2}$ or $\tau=ix$, ($x>0$). Then, $S_{p+r}^{(p)}(\tau)$ for any integer $p\ge 0$, $r\ge2$ can be written as a polynomial in four variables $K$, $K'$,  $E$ and $k$.%, of the total power in $K$ at least $p+1$ and at most $2(p+1)$ for $r=2$ and at least $p+r$ and at most $2(p+r)$ for $r>2$.
\end{theorem}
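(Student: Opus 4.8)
The plan is to reduce $S^{(p)}_{p+r}(\tau)$, by means of formula \eqref{eq:eps4c} and the one‑line differentiation identity stated just above, to the classic lattice sums $G_r(\tau):=(\mathrm{Im}\,\tau)^{-r/2}S_r(\tau)=\sum^{'}_{m,n}(m+n\tau)^{-r}$ together with their $\tau$‑derivatives, and then to convert these into $K,K',E,k$ through Rayleigh's formulae \eqref{eq:3.35}--\eqref{eq:3.36} and the elliptic‑integral derivatives \eqref{eq:3.5}, \eqref{eq:3.8}, \eqref{eq:3.50}. Odd $r$ is trivial: then $G_r\equiv0$ (the map $\lambda\mapsto-\lambda$), hence $S^{(p)}_{p+r}(\tau)=0$; so assume $r$ even.

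First I would set up the reduction. Putting $q=p+r$ in \eqref{eq:eps4c} gives
\[
S^{(p)}_{p+r}(\tau)=S_r(\tau)-2i(\mathrm{Im}\,\tau)^{r/2}\sum_{t=1}^{p}c_{p,t}(\tau)\,\Phi_{r,t}(\tau),
\]
where $c_{p,t}(\tau)=\sum_{s=1}^{t}(-1)^{t-s}\binom{p}{s}\binom{p-s}{t-s}\tau^{t-s}\mathrm{Im}(\tau^{s})$ and $\Phi_{r,t}(\tau)=\sum_{n}n^{t}\sum_{m}(m+n\tau)^{-(r+t)}$. The one‑line computation $\frac{d}{d\tau}\Phi_{r,t-1}=-(r+t-1)\Phi_{r,t}$, iterated from $G_r$, gives $\Phi_{r,t}(\tau)=\frac{(-1)^{t}(r-1)!}{(r+t-1)!}\frac{d^{t}}{d\tau^{t}}G_r(\tau)$; the $n=0$ term drops out at every step, so the conditional convergence of $G_2$ is harmless. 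On either line $\tau=ix$ or $\tau=\frac{1+ix}{2}$ one has $\mathrm{Im}\,\tau=cx$ and $\frac{d}{d\tau}=\frac1{ic}\frac{d}{dx}$ with $c=1$, resp. $c=\frac12$; using $\mathrm{Im}(\tau^{s})=x^{s}\sin\frac{\pi s}{2}$ (only odd $s$ surviving) one checks that the surviving numerical coefficients are real and that the $t$‑th summand is a real constant times $x^{r/2}c_{p,t}(\tau)\big|_{\mathrm{line}}\,\frac{d^{t}}{dx^{t}}G_r(\tau)\big|_{\mathrm{line}}$, in which $x^{r/2}c_{p,t}(\tau)\big|_{\mathrm{line}}$ is a real polynomial in $x$ with all terms of degree between $r/2+1$ and $r/2+t$ (for $\tau=ix$ it is the single monomial $\mathrm{const}\cdot x^{r/2+t}$). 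So it remains to handle $S_r(\tau)$ and the products $x^{r/2+j}\frac{d^{t}}{dx^{t}}G_r(\tau)$, $1\le j\le t\le p$, on these lines.

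Next the analytic input. Dividing Rayleigh's formulae \eqref{eq:3.35}--\eqref{eq:3.36} by $\mathrm{Im}\,\tau=cK'/K$ cancels the factor $K'$ and gives $G_2(ix)=\frac43K\bigl(3E+(k^{2}-2)K\bigr)$ and $G_2\bigl(\frac{1+ix}{2}\bigr)=4K\bigl(2E+\frac{4k^{2}-5}{3}K\bigr)$ — polynomials in $K,E,k$, each divisible by $K$. For $r\ge4$, \eqref{eq:weiestrcoef} expresses every $S_{2a}$ as a rational polynomial in $S_4,S_6$, and the classical theta‑constant representations of $g_2\propto S_4$ and $g_3\propto S_6$ (via $K=\frac\pi2\vartheta_3^{2}$, the powers of $\pi$ cancelling) show that $G_r(\tau)$ on these lines is a polynomial in $K$ and $k$ alone, divisible by $K^{r/2}$. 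Hence in all cases $G_r(\tau)\in K^{r/2}\mathbb{R}[K,E,k]$ on both lines, with \emph{no} occurrence of $K'$. Separately, combining \eqref{eq:3.5}, \eqref{eq:3.8} with the Lemma \eqref{eq:3.50} yields $\frac{dK}{dx}=-\frac{2K^{2}}{\pi}\bigl(E-(1-k^{2})K\bigr)$, $\frac{dE}{dx}=-\frac{2K^{2}}{\pi}(1-k^{2})(E-K)$, $\frac{dk}{dx}=-\frac{2K^{2}}{\pi}k(1-k^{2})$, so $\frac{d}{dx}$ carries $K^{j}\mathbb{R}[K,E,k]$ into $\frac1\pi K^{j+1}\mathbb{R}[K,E,k]$, and therefore $\frac{d^{t}}{dx^{t}}$ carries $K^{r/2}\mathbb{R}[K,E,k]$ into $\frac1{\pi^{t}}K^{r/2+t}\mathbb{R}[K,E,k]$.

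Assembling: $\frac{d^{t}}{dx^{t}}G_r(\tau)\in\frac1{\pi^{t}}K^{r/2+t}\mathbb{R}[K,E,k]$, hence, since $x=K'/K$, $x^{r/2+j}\frac{d^{t}}{dx^{t}}G_r(\tau)\in\frac1{\pi^{t}}K'^{\,r/2+j}K^{\,t-j}\mathbb{R}[K,E,k]\subset\mathbb{R}[K,K',E,k]$ because $t-j\ge0$ and $1/\pi^{t}$ is a real scalar; also $S_r(\tau)=(\mathrm{Im}\,\tau)^{r/2}G_r(\tau)\in K'^{\,r/2}\mathbb{R}[K,k]$. Summing the finitely many real‑coefficient contributions gives $S^{(p)}_{p+r}(\tau)\in\mathbb{R}[K,K',E,k]$, and the argument is identical on the two lines, only $c$ and the shape of $c_{p,t}$ differing. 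The main obstacle is the third paragraph: one must know that on these particular lines $G_r$ is $K'$‑free and divisible by $K^{r/2}$. The $K'$‑freeness is exactly what keeps $E(k')$ from ever entering — $K'$ then occurs only inside the already‑evaluated coefficient $x^{r/2+j}$, never as the argument of a derivative — while the divisibility by $K^{r/2}$ is exactly what absorbs the negative powers of $K$ produced by that coefficient; for $r=2$ this is Rayleigh's identities, whereas for $r\ge4$ it rests on $g_2,g_3$ being $\pi$‑free, $K'$‑free polynomials in $K,k$, which on the line $\tau=\frac{1+ix}{2}$ requires the (classical, if slightly less standard) modular reparametrisation of the rhombic lattice.
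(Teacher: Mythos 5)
Your proposal is correct and follows essentially the same route as the paper's proof: you reduce $S_{p+r}^{(p)}$ via \eqref{eq:eps4c} and term-by-term differentiation to $\tau$-derivatives of the classic sums (the paper's $V_r^{(0)}$ and \eqref{eq:derivS}), convert these to $k$- resp.\ $x$-derivatives through the Lemma \eqref{eq:3.50} with \eqref{eq:3.5}, \eqref{eq:3.8}, feed in Rayleigh's formulae and the classical $S_4$, $S_6$ expressions together with the recursion \eqref{eq:weiestrcoef}, and cancel the negative powers of $K$ coming from $\tau=iK'/K$ by tracking the minimal $K$-power raised by each differentiation — exactly the paper's induction \eqref{eq:derivr1}--\eqref{eq:derivr7}. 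The only deviations are cosmetic or harmless: you differentiate in $x$ rather than dividing by $\diff{\tau}{x}\diff{x}{k}$, you invoke theta-constant representations where the paper cites the explicit formulas \eqref{eq:ekappa4}--\eqref{eq:ekappa6}, and your side remarks on realness of coefficients and on $S_r\in K'^{\,r/2}\mathbb{R}[K,k]$ (which for $r=2$ should also allow $E$) are inessential to the stated polynomiality.
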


\begin{proof}
%From \eqref{sk} Theorem holds for $p=0$ because of \eqref{eq:3.35}-\eqref{eq:3.36}.
Let $r$ be an arbitrary natural number $r\ge 2$. It will be convenient to consider the slightly modified lattice sums \eqref{sk} called the modular form
\begin{equation}
\label{eq:modular}
V_r^{(0)}(\tau):= {\sum_{m,n}}^{'} \frac{1}{(m+n\tau)^r}.
\end{equation}
It follows from \eqref{sk}  that $V_r^{(0)}(\tau)=0$ for $r$ odd. Consider $\tau$ as a function $\tau=\tau(x(k))$ depending on $k$.
Using \eqref{eq:3.1} and \eqref{eq:3.35}-\eqref{eq:3.36} we arrive at the formula
\begin{equation}
\label{eq:ekappa}
V_2^{(0)}(\tau):=\sum_{n}\sum_m\frac{1}{(m+n\tau)^{2}}=
\begin{cases}
\frac{4}{3}(4k^2-5)K^2(k)+8K(k)E(k), & \tau=\frac{1+ix}{2}\\
\frac{4}{3}(k^2-2)K^2(k)+4K(k)E(k), & \tau=ix.
\end{cases}
\end{equation}
	
The classic lattice sums $S_4$ and $S_6$ defined by \eqref{sk} are expressed through the elliptic integrals \cite[items 18.9.4 and 18.9.5]{abramowitz_and_stegun}. Using  \eqref{eq:modular} we can write these expressions in the form
\begin{equation}
\label{eq:ekappa4}
V_4^{(0)}(\tau)=
\begin{cases}
\frac{16}{45}(16k^4-16k^2+1)K^4(k), & \tau=\frac{1+ix}{2}\\
\frac{16}{45}(k^4-k^2+1)K^4(k), & \tau=ix.
\end{cases}
\end{equation}
\begin{equation}
\label{eq:ekappa6}
V_6^{(0)}(\tau)=
\begin{cases}
\frac{128}{945}(2k^2-1)(32k^4-32k^2-1)K^6(k), & \tau=\frac{1+ix}{2}\\
\frac{64}{945}(k^2-2)(2k^2-1)(k^2+1)K^6(k), & \tau=ix.
\end{cases}
\end{equation}	
Using \eqref{sk} and \eqref{eq:modular} we rewrite relation \eqref{eq:weiestrcoef} as the recursive formula to determine the values  \eqref{eq:modular}
\begin{equation}
\label{eq:vformula}
V_{2l}^{(0)}(\tau)=3\sum_{j=2}^{l-2} \frac{(2j-1)(2l-2j-1)}{(2l-1)(2l+1)(l-3)}V_{2j}^{(0)}(\tau)V_{2l-2j}^{(0)}(\tau), \ \ l=4,5,\ldots
\end{equation}
The term $(\mathrm{Im}\tau)^{r/2}$ from \eqref{eq:eps4c} becomes $i^{r/2}$  in the first case $\tau=ix$ and $\left(\frac{i}{2}\right)^{r/2}$ in the second case $\tau=\frac{ 1+ix}{2}$. Consider the first case for definiteness. The expression $\tau^{r/2} = \left(i\frac{K'}{K}\right)^{r/2}$ contains  $K$ in the denominator with power $r/2$. But it is canceled with the multiplier $K^r$ in  $V_r^{(0)}$ for $r>2$. Therefore, $S_r^{(0)}$ is a polynomial in $k$, $K$, $K'$ and $E$ for any even $r$. The theorem is proved for  $p=0$.

The proof for $p>0$  can be obtained from Fa\'{a} di Bruno's formula. But we shall follow a constructive recursive approach preferable in computations. %The functions $S_2(\tau)$ and $V_2^{(0)}(\tau)$ can be also considered as $S_2(\tau(x(k)))$  and $V_2^{(0)} (\tau(x(k)))$ by \eqref{eq:3.35}, \eqref{eq:3.36} and  \eqref{eq:ekappa} respectively. Between $V_2^{(0)}$ and $S_2$ is relation $S_2(\tau)=\mathrm{Im}\tau V_2^{(0)}(\tau)$.
 Looking at $\tau$ as a function $\tau=\tau(x(k))$ depending on $k$, it follows from the chain rule that
\begin{equation}
\label{eq:deriv}
\diff*[]{V_r^{(0)}(\tau(x(k)))}{k}=\diff*[]{V_r^{(0)}(\tau)\diff[]{\tau}{x}\diff[]{x}{k}}{\tau}.
\end{equation}
The derivatives of $V_r^{(0)}$ in $\tau$ are the series
\begin{equation}
\label{eq:derivS}
\diff*[s]{V_r^{(0)}(\tau)}{\tau}=(-1)^s s!\binom{s+r-1}{r-1}\sum_{n}n^s\sum_m\frac{1}{(m+n\tau)^{s+r}}, \quad s=1,2,\ldots .
\end{equation}
Using \eqref{eq:deriv} introduce the functions
\begin{equation}
\label{eq:deriv2}
V_r^{(1)}(k)=%\diff*[]{S_2(\tau)}{\tau}=
\frac{\diff*[]{V_r^{(0)}(\tau(x(k)))}{k}}{\diff[]{\tau}{x}\diff[]{x}{k}}, \;
V_r^{(s)}(k)=\diff*[s]{V_r^{(0)}(\tau)}{\tau}=\frac{\diff*[]{V_r^{(s-1)}(k)}{k}}{\diff[]{\tau}{x}\diff[]{x}{k}}, \; s=1,2,\ldots.
\end{equation}
The derivative $\diff[]{\tau}{x}$ becomes the constant $i$ in the first case $\tau=ix$ and $\frac{i}{2}$ in the second case $\tau=\frac{\pm 1+ix}{2}$. As we have already assumed before consider the first case for definiteness. It follows from \eqref{eq:3.50} that %the expression $\left(\diff[]{\tau}{x}\diff[]{x}{k} \right)^{-1}$ is a polynomial,
\begin{equation}
\label{eq:deriv1P}
\left(\diff[]{\tau}{x}\diff[]{x}{k} \right)^{-1}=
\frac{2ki}{\pi} (1-k^2)K^2(k).
\end{equation}
The second formula \eqref{eq:deriv2} can be written in the form
\begin{equation}
\label{eq:derivr1}
V_r^{(s)}(k)=\frac{2ki}{\pi} (1-k^2)K^2(k) \diff*[]{V_{r}^{(s-1)}(k)}{k}, \; s=1,2,\ldots
\end{equation}
For example the first three functions $V_r^{(1)}(k)$ are calculated by \eqref{eq:deriv2}-\eqref{eq:derivr1} by using \eqref{eq:3.5} and \eqref{eq:3.8}
\begin{multline}
\label{eq:deriv11}
V_2^{(1)}(k)
%= \frac{2ki}{\pi}(1-k^2) K^2(k) \diff*[]{S_2(\tau(x(k)))}{k}  \\
=\frac{4 i}{3\pi} K^2(k)\left[ -3E^2(k)-2(k^2-2)E(k)K(k)+(k^2-1)K^2(k) \right].
\end{multline}
\begin{multline}
\label{eq:deriv12}
V_4^{(1)}(k)
=\frac{16 i}{45\pi} K^5(k)\left[ (k^4-3k^2+2)K(k)-2(k^4-k^2+1)E(k) \right].
\end{multline}
\begin{multline}
\label{eq:deriv13}
V_6^{(1)}(k)
=\frac{128 i}{945\pi} K^7(k)\left[ (k^6+k^4-4k^2+2)K(k)-(2k^6-3k^4-3k^2+2)E(k) \right].
\end{multline}
%\begin{equation}
%\label{eq:deriv111}
%U_2(k)= -\frac{32 }{\pi^2} E(k)K^3(k)\left( E^2(k)-(2-k^2)E(k)K(k)+(1-k^2)K^2(k) \right).
%\end{equation}
%Here, the formula \eqref{eq:3.35} with the substitution $K(k')=\frac{E(k)-(1-k^2)K(k)}{k(1-k^2)}$ is used.
Equation \eqref{eq:derivS} implies that
\begin{equation}
\label{eq:recurence}
\sum_{n}n^p\sum_m\frac{1}{(m+n\tau)^{p+r}}=\frac{(-1)^p}{p!\binom{p+r-1}{r-1}}V_{r}^{(p)}(k).
\end{equation}
The relation \eqref{eq:eps4c} can be written in the form
\begin{multline}
\label{eq:recur}
S_{p+r}^{(p)}(\tau)=S_{r}(\tau) \\
+ 2i (\mathrm{Im}\;\tau)^{\frac{r}{2}}\times
\sum_{t=1}^p\left[\sum_{s=1}^{t} {(-1)^{t-s}}\binom{p}{s}\binom{p-s}{t-s}\tau^{t-s}\mathrm{Im}\;(\tau^s)\right]V_{r}^{(t)}(\tau)
\end{multline}
We now proceed to prove by induction that the recurrence formulae %\eqref{eq:derivr1}-\eqref{eq:deriv13}
\eqref{eq:derivr1} determine the function $V_{r}^{(p)}(k)$ for any integer $p\ge 0$ as a homogeneous polynomial in $K$ and $E$ of the total power in $k$, $K$ at least $p+1$ and at most $2(p+1)$ for $r=2$ and at least $p+r$ and at most $2p+r$ for $r>2$. %of degree $2(p+1)$ with polynomial in $k$ coefficients. % One can see that  $V_r^{(1)}(k)$ given by \eqref{eq:deriv11}-\eqref{eq:deriv13} and formula \eqref{eq:vformula} is a required polynomial.

Let us assume that the induction hypothesis is true for $p-1$ i.e. $V_{r}^{(p-1)}(k)$ is a polynomial of the variables $K$, $E$ and $k$ of the total power in $K$ of the total power in $K$ at least $p$ and at most $2p$ for $r=2$ and at least $p+r-1$ and at most $2p+r-2$ for $r>2$. %at most $2(p-1)$ in $K$ and $E$.
Consider the action of the operator determined by the right part of \eqref{eq:derivr1} up to a constant multiplier on the polynomial term
\begin{align}
\label{eq:derivr7}
k(1-k^2)K^2 \diff*																																																						[]{[f(k)K^m E^l ]}{k}= k(1-k^2) K^{m+2} E^l \diff*[]{f(k)}{k}
\\ \notag
+f(k)K^{m+1} E^{l-1} [m E^2  + (1-k^2)(l-m)K E -l (1-k^2)K^2 ],
\end{align}
where $f(k)$ is a polynomial in $k$ and $p\le m\le 2p$ for $r=2$ or $p+r-1 \le m \le 2p+r-2$ for $r>2$. Here, we omit the argument in the elliptic integrals for short. Hence, after application of the operator from \eqref{eq:derivr1} the polynomial $V_r^{(p-1)}(k)$ is transformed onto the homogeneous polynomial $V_r^{(p)}(k)$ in $K$, $E$ and $k$. Moreover, $V_r^{(p)}$ is presented as a polynomial with the multiplier $K^{m'}$, %since each application of the operator \eqref{eq:derivr7} increases the power in $K$.
where from \eqref{eq:derivr7} $p+1\le m'\le 2(p+1)$ for $r=2$ and $p+r\le m' \le 2p+r$ for $r>2$.

The term $ (\mathrm{Im}\tau)^\frac{r}{2}\tau^t\mathrm{Im} (\tau^s)$, ($0<s\leq p$, $0\leq t \leq p-s$) from \eqref{eq:recur} after substitution $\tau = i\frac{K'}{K}$ contains  only  $K$ in the denominator with power less than $p+r$. But it is canceled with the multiplier $K$ with power at least $p+1$ for $r=2$ and $p+r$ for $r>2$ in  $V_r^{(p)}$.
Substitution of \eqref{eq:recurence} into \eqref{Eq:Sp2} yields the required polynomial representation of $S_{p+r}^{(p)}(\tau)$.

%\textcolor {blue} {A co z $K'$? Bo \eqref{eq:3.8p} nie wielomian. Theorem \ref{th:polyn} mozna bylo by przedstawic jako wlasnosci szeregu \eqref{eq:recurence}. A potem dopisac, ze $S_{p+2}^{(p)}(\tau)$ ma przedstawienie \eqref{Eq:Sp2} przez wielomiany \eqref{eq:recurence}.}
%\textcolor{red}{$K'$ potraktowalem jako funkcje $K$ zmiennej $\sqrt{1-k^2}$. Faktyczne w teorii funkcji eliptycznych traktuje sie jo jako nowa zmienna. Moze zatem lepiej  twierdzenie sformuloeac " ...can be written as a polynomial in \textbf{four} variables $K$, $K'$, $E$ and $k$ ... "   bo o to chodzi w zasadzie w twierdzeniu. Tak przedstawial $S$ i $T$ Yakubovitz. Tam tez byla w wielomianie fcja $K'$. W naszych wzorach ponizej po wniosku tez $K'$ wystepuje.}
\end{proof}

%\begin{theorem}
%	\label{th:polyn1}
%	Let $\tau=\frac{1+ix}{2}$ or $\tau=ix$ ($x>0$). Then, $S_{q}^{(p)}(\tau)$ for any integers $p\ge 0$, $q\ge 3$ can be written as a polynomial in four variables $K$, $K'$, $E$ and $k$ of the total power at most $2p$ in $K$ and $E$
%\end{theorem}

%\begin{proof}
%The proof is similar to the proof of Theorem \ref{th:polyn}  but instead of \eqref{Eq:Sp2} we use  \eqref{eq:eps4c}  where the initial function $V_{q-p}^{(0)} (\tau)$ is presented   by  \eqref{eq:ekappa} or  \eqref{eq:ekappa4}.

%\textcolor {blue} {trzeba ten dowod troche rozszerzyc, bo nie jasne.}

%or \eqref{eq:ekappa6} or by using  very known relation 	\eqref{eq:weiestrcoef} as 	a polynomial of elliptic functions $K$, $K'$, $E$ and parameter $k$.
%\end{proof}

\begin{rem}
	\label{rem1}
The proof of Theorem \ref{th:polyn} %and \ref{th:polyn1} are
is  constructive. This yields a recursive method on $V_{q-p}^{(t)}$ ($p\ge 0$, $q\ge p+2$) to compute the sums $S_{q}^{(p)}(\tau)$ as follows
\begin{multline}
\label{eq:recursive}
S_q^{(p)}(\tau)=S_{q-p}(\tau) \\
+ 2i (\mathrm{Im}\;\tau)^{\frac{q-p}{2}}\times
\sum_{t=1}^p\left[\sum_{s=1}^{t} {(-1)^{t-s}}\binom{p}{s}\binom{p-s}{t-s}\tau^{t-s}\mathrm{Im}\;(\tau^s)\right]V_{q-p}^{(t)}(\tau)
\end{multline}
for  $\tau=i x$  or $\tau=\frac{1+i x}{2}$.
\end{rem}
Appendix contains the Mathematica code to compute the lattice sums $S_{q}^{(p)}(\tau)$ for $\tau=i x$ (Listing 1)  and  $\tau=\frac{1+i x}{2}$ (Listing 2) where $x$ is given by \eqref{eq:3.1}. The computed values $S_2^{(0)}$ and  $S_3^{(1)}$ coincide with the same values obtained in \cite[Th.4.6]{Yakubovich}.

Below, we write typical new formulae obtained by application of Theorem \ref{th:polyn} and by \eqref{eq:recursive}
\begin{multline}
\label{eq:S24i}
S_4^{(2)}(ix)=\frac{4}{3\pi^2}\left(4 K' (16 E (E - K) \left(E + ( k^2-1) K) K'^2 - \right.\right. \\  \left. \left. 4 (3 E^2 + 2 E (k^2-2) K - ( k^2-1) K^2\right) K' \pi + (3 E + (k^2-2) K) \pi^2)\right),
%S_4^{(2)}(ix)=\frac{4}{3\pi^2} K(k') \left(16 E^3(k) K^2(k') \right. \\ \left. -
%4 E^2(k) K(k') (3 \pi - 4 (-2 + k^2) K(k) K(k')) + \pi K(k) ((-2 + k^2) \pi \right. \\ \left.
%+ 4 (-1 + k^2) K(k) K(k')) + '
%E(k) (3 \pi^2 - 8 (-2 + k^2) \pi K(k) K(k') \right. \\ \left.
%- 16 (-1 + k^2) K^2(k) K^2(k'))\right),
\end{multline}
\begin{multline}
\label{eq:S24}
S_4^{(2)}\left(\frac{1+ix}{2}\right)=
\frac{2}{3\pi^2} K' \left( 16 (2 E - K) (E^2 + 2 E ( k^2-1) K - ( k^2-1) K^2) K'^2  \right. \\ \left. - 8 (3 E^2 + E (4 k^2 - 5) K -
2 ( k^2 - 1) K^2) K' \pi + (6 E + (4 k^2 - 5) K) \pi^2\right)
%\frac{2}{3\pi^2} K(k')\left(32 E^3(k) K^2(k') \right. \\
%- 8 E^2(k) K(k') (3 \pi + 2 (5 - 4 k^2) K(k) K(k'))  \\
% + E(k) (6 \pi^2 - 8 (-5 + 4 k^2) \pi K(k) K(k') - 64 (-1 + k^2) K^2(k) K^2(k'))  \\
%  + K(k) ((-5 + 4 k^2) \pi^2 + 16 (-1 + k^2) \pi K(k) K(k') \\ \left.  +
%16 (-1 + k^2) K^2(k) K^2(k'))\right),
\end{multline}	
	\begin{multline}
	S_5^{(3)}(ix)=\frac{4}{3\pi^3} K' \left(	48 E (E - K) (E + (k^2- 1) K) K'^2 \pi
 \right.  \\ \left.
	 	-16 \left(3 E^4 + 4 E^3 ( k^2 - 2) K - 	6 E^2 (k^2 - 1) K^2 - (k^2 - 1)^2 K^4\right) K'^3 +   \right. \\ \left.
	 - 6 (3 E^2 + 	2 E (k^2 - 2) K - ( k^2 - 1) K^2) K' \pi^2 + (3 E + (k^2 - 2) K) \pi^3\right),
	\end{multline}
	\begin{multline}
	S_5^{(3)}\left(\frac{1+ix}{2}\right)=
	\frac{2}{3\pi^3}  K' \left( 48 (2 E - K) (E^2 + K(2 E - K) ( k^2 - 1) ) K'^2\pi \right. \\ \left.
	 -32 \left(3 E^4 + 2 E^3 (4 k^2 - 5) K +( k^2 - 1) K^2( 6 E  K - 12 E^2   -  K^2)\right) K'^3  \right. \\ \left.
 - 12 (3 E^2 + E (4 k^2 - 5) K - 2 (k^2 - 1) K^2) K' \pi^2 + (6 E + (4 k^2 - 5) K) \pi^3
	\right)
	\end{multline}	
\begin{multline}
	S_6^{(4)}(ix)=\frac{4}{15 \pi^4} K' \left(256 (3 E^5 + 5 E^4 (k^2-2) K - 10 E^3 (k^2-1) K^2 \right. \\ \left.
	- 5 E (k^2-1)^2 K^4 - (k^2-2) (k^2-1)^2 K^5) K'^4 \right. \\ \left.
	- 320 (3 E^4 + 4 E^3 (k^2-2) K - 6 E^2 (k^2-1) K^2 - (k^2-1)^2 K^4) K'^3 \pi \right. \\ \left.
	- 40 (3 E^2 + 2 E (k^2-2) K - (k^2-1) K^2) K' \pi^3 + 5 (3 E + (k^2-2) K) \pi^4
	\right. \\ \left.
	+ 480 E (E - K) (E + (k^2-1) K) K'^2 \pi^2  \right)
\end{multline}
\begin{multline}
S_6^{(4)}\left(\frac{1+ix}{2}\right)=\frac{2}{15 \pi^4} K' \left(256 (6 E^5 + 5 E^4 (4 k^2-5) K -
40 E^3 (k^2-1) K^2 \right. \\ \left.
+  (k^2-1)( 30 E^2  K^3 - 10 E  K^4 +  K^5)   ) K'^4
\right. \\ \left.
 - 640 (3 E^4 + 2 E^3 (4 k^2-5) K+  (k^2-1)(6 E  K^3- 12 E^2  K^2  -  K^4 )   ) K'^3 \pi
\right. \\ \left.
- 80 (3 E^2 + E (4 k^2-5) K - 2 (k^2-1) K^2) K' \pi^3 +
5 (6 E + (4 k^2-5) K) \pi^4
\right. \\ \left.
+ 480 (2 E - K) (E^2 +  (k^2-1)(2 E  K - K^2)  ) K'^2 \pi^2 \right)
\end{multline}

\section{Exact values for lattice sums}
\label{sec:exact}
Theorem \ref{th:polyn}  yields exact startling formulae including the number $\pi$ for the special lattice sums.
Let $k_r$ be such an elliptic modulus for which $x(k_r) = \sqrt r$, see \eqref{eq:3.1}.   It follows from \cite{bor}, \cite{borzuc} that
\begin{equation}
k_1= \frac{1}{\sqrt 2}, \  k_2= \sqrt 2- 1,\  k_3= \frac{1}{4} \sqrt 2 (\sqrt 3-1),\ k_4= 3- 2\sqrt 2,
\label{eq:3.9}
\end{equation}
\begin{equation}
K(k_1) = \frac{\Gamma^2(1/4)}{ 4\sqrt \pi},\quad     K(k_2) = \frac{(\sqrt 2+1)^{1/2} \Gamma(1/8) \Gamma(3/8)}{2^{13/4} \sqrt \pi},
\label{eq:3.10}
\end{equation}
\begin{equation}
K(k_3) = \frac{3^{1/4} \Gamma^3(1/3)}{2^{7/3} \pi},\quad     K(k_4) = \frac{(\sqrt 2+1) \Gamma^2 (1/4)}{2^{7/2} \sqrt \pi},
\label{eq:3.11}
\end{equation}
and
\begin{equation}
 k_{1/2}= \sqrt{2\sqrt 2-2},\  k_{1/3}= \frac{1}{4} \sqrt 2 (\sqrt 3+1),\ k_{1/4}= 2^{1/4}(2\sqrt 2-2).
\label{eq:3.9u}
\end{equation}
Here we use known fact that $k_r '$ is the complementary modulus to $k_r$ i.e $k_r '=k_{1/r}$. Using \eqref{eq:3.1} we obtain
\begin{equation}
K(k_{1/2}) = \sqrt{2} K(k_2) ,\quad     K(k_{1/3}) = \sqrt{3} K(k_3),\quad K(k_{1/4})=2K(k_4).
\label{eq:3.10u}
\end{equation}
%where $\Gamma (z)$ is Euler's gamma-function \cite[Vol. I]{erd}.

Following \cite{borzuc} consider the elliptic alpha function
\begin{equation}
\alpha(r)=  \frac{E (k^\prime_r)}{ K(k_r)} - \frac{\pi}{ 4 [K(k_r)]^2}=  \frac{\pi}{ 4 [K(k_r)]^2} + \sqrt{ r} \left[ 1-\frac{E (k_r)}{K(k_r)} \right].
\label{eq:3.12}
\end{equation}
We have
\begin{equation}
\label{eq:Ek}
E(k_r)=K(k_r)-\frac{\alpha(r)}{\sqrt{r}}K(k_r)+\frac{\pi}{4\sqrt{r}K(k_r)}
\end{equation}
and
\begin{equation}
\label{eq:Ekprim}
E(k_r^\prime)=\alpha(r)K(k_r)+\frac{\pi}{4K(k_r)}.
\end{equation}
The first four values of $\alpha(r)$, $r=1,2,3,4$ are exactly written as follows
\begin{equation}
\alpha(1)= \frac{1}{2},\   \alpha(2)=  \sqrt{ 2}-1, \ \alpha(3)=  \frac{1}{ 2} (\sqrt{ 3}-1),\  \alpha(4)=  2 (\sqrt{ 2}-1)^2.
\label{eq:3.13}
\end{equation}
The value of $\alpha(r^{-1})$ is calculated by formula  \cite[p. 153]{bor}
\begin{equation}
\alpha\left(\frac{1}{r}\right)=\frac{\sqrt{r}-\alpha(r)}{r}.
\label{eq:3.13u}
\end{equation}
This yields
\begin{equation}
  \alpha\left(\frac{1}{2}\right)=\frac{1}{2}, \ \alpha\left(\frac{1}{3}\right)=  \frac{1}{ 6} (\sqrt{ 3}+1),\  \alpha\left(\frac{1}{4}\right)=  \sqrt{ 2}-1.
\label{eq:3.13u1}
\end{equation}

Using the above and recursive formula \eqref{eq:recursive} we calculate some $S_q^{(p)}$. For example, if $\tau=i$ we have $r=1$ and  $k_1=k_1'=\frac{1}{\sqrt{2}}$. The corresponding elliptic integrals of the first kind are calculated by the formulae $K(k_1)=K(k_1 ') =\frac{\Gamma^2(1/4)}{4\sqrt{\pi}}$. The elliptic integrals of the second kind are calculated by \eqref{eq:Ek} and \eqref{eq:Ekprim}
$$
E(k_1)=K(k_1)-{\alpha(1)}K(k_1)+\frac{\pi}{4K(k_1)}=\frac{8\pi^2+\Gamma^4\left(1/4\right)}{8\sqrt{\pi}+\Gamma^2\left(1/4\right)},
$$
$$
E(k_1 ') ={\alpha(1)}K(k_1)+\frac{\pi}{4K(k_1)}=\frac{4\pi^2+\Gamma^4\left(1/4\right)}{4\sqrt{\pi}+\Gamma^2\left(1/4\right)}.
$$
Substituting the above values into \eqref{eq:S24i} we have
\begin{equation}
S_4^{(2)}(i)=\frac{\pi}{3}.
\end{equation}

Along similar lines we can calculate some values of the $S_q^{(p)}$. The results are summarized in Tables \ref{tab:4}-\ref{tab:10b}.
%As we observe from Theorems \ref{th:polyn} and \ref{th:polyn1}, it is sufficient to establish the above constants for a positive imaginary part of the corresponding $\tau$. To do this, we employ particular cases \eqref{eq:3.9} of the
%modulus $k_r$ and the corresponding singular values $K(k_r)$ ($r=1, 2, 3, 4$) given by \eqref{eq:3.10} and \eqref{eq:3.11}. In fact, letting $x=1,\ \sqrt{2},\ \sqrt{3},\  2$	 and taking in mind \eqref{eq:3.12}, \eqref{eq:3.13}, we derive, respectively,

\begin{landscape}
\begin{table}[t]
	\caption{
		\it{The values of $S_{p+2}^{(p)}(ix)$ calculated by \eqref{eq:recursive}} }
	\label{tab:4}
	\centering
	\begin{tabular}{|l|c|c|c|c|}
		\hline
		$p$ & ${S}_{p+2}^{(p)}(i)$ & ${S}_{p+2}^{(p)}(i\sqrt{2})$ & ${S}_{p+2}^{(p)}(i\sqrt{3})$& ${S}_{p+2}^{(p)}(2i)$\\ \hline\hline
		0& $\pi$  & $\pi+\frac{\Gamma^2\left(\frac{1}{8}\right)\Gamma^2\left(\frac{3}{8}\right)}{48\sqrt{2}\pi}$ & $\pi+\frac{\sqrt{3}\Gamma^6\left(\frac{1}{3}\right)}{16\cdot 2^{2/3}\pi^2}$  & $\pi+\frac{\Gamma^4\left(\frac{1}{4}\right)}{16\pi}$ \\
		1 & $\frac{\pi}{2}+\frac{\Gamma^8\left(\frac{1}{4}\right)}{384\pi^3}$  & $\frac{\pi}{2}+\frac{\Gamma^4\left(\frac{1}{8}\right)\Gamma^4\left(\frac{3}{8}\right)}{1024\pi^3}$ & $\frac{\pi}{2}+\frac{{3}\Gamma^{12}\left(\frac{1}{3}\right)}{256\cdot 2^{1/3}\pi^5}$  & $\frac{\pi}{2}+\frac{\Gamma^8\left(\frac{1}{4}\right)}{192\pi^3}$ \\
	2 & $\frac{\pi}{3}$ & $\frac{\pi}{3}+\frac{\Gamma^6\left(\frac{1}{8}\right)\Gamma^6\left(\frac{3}{8}\right)}{24576\sqrt{2}\pi^5}$ & $\frac{\pi}{3}+\frac{\sqrt{3}\Gamma^{18}\left(\frac{1}{3}\right)}{2048\pi^8}$  & $\frac{\pi}{3}+\frac{\Gamma^{12}\left(\frac{1}{4}\right)}{3072\pi^5}$ \\
	3 & $\frac{\pi}{4}+\frac{\Gamma^{16}\left(\frac{1}{4}\right)}{49152\pi^7}$ & $\frac{\pi}{4}+\frac{\Gamma^8\left(\frac{1}{8}\right)\Gamma^8\left(\frac{3}{8}\right)}{3145728\pi^7}$ & $\frac{\pi}{4}+\frac{9\Gamma^{24}\left(\frac{1}{3}\right)}{65536\cdot 2^{2/3}\pi^{11}}$  & $\frac{\pi}{4}+\frac{\Gamma^{16}\left(\frac{1}{4}\right)}{49152\pi^7}$ \\ 	
	4 & $\frac{\pi}{5}$ & $\frac{\pi}{5}+\frac{\Gamma^{10}\left(\frac{1}{8}\right)\Gamma^{10}\left(\frac{3}{8}\right)}{62914560\sqrt{2}\pi^9}$ & $\frac{\pi}{5}+\frac{9\sqrt{3}\Gamma^{30}\left(\frac{1}{3}\right)}{2621440\cdot 2^{1/3}\pi^{14}}$  & $\frac{\pi}{5}+\frac{\Gamma^{20}\left(\frac{1}{4}\right)}{983040\pi^9}$ \\
	5 & $\frac{\pi}{6}+\frac{\Gamma^{24}\left(\frac{1}{4}\right)}{23592960\pi^{11}}$ & $\frac{\pi}{6}+\frac{23\Gamma^{12}\left(\frac{1}{8}\right)\Gamma^{12}\left(\frac{3}{8}\right)}{12079595520\pi^{11}}$ & $\frac{\pi}{6}+\frac{9\Gamma^{36}\left(\frac{1}{3}\right)}{10485760\pi^{17}}$  & $\frac{\pi}{6}+\frac{\Gamma^{24}\left(\frac{1}{4}\right)}{11796480\pi^{11}}$ \\
	 6 & $\frac{\pi }{7}$ & $\frac{\pi }{7}+\frac{29 \Gamma^{14}
		\left(\frac{1}{8}\right) \Gamma^{14}
		\left(\frac{3}{8}\right)}{676457349120 \sqrt{2} \pi
		^{13}}$ & $\frac{\pi }{7}+\frac{27 \sqrt{3} \Gamma^{42}
		\left(\frac{1}{3}\right)}{1174405120\ 2^{2/3} \pi ^{20}}$
	& $\frac{\pi }{7}+\frac{\Gamma^{28}
		\left(\frac{1}{4}\right)}{440401920 \pi ^{13}}$ \\
	7 & $\frac{\pi }{8}+\frac{13 \Gamma^{32}
		\left(\frac{1}{4}\right)}{42278584320 \pi ^{15}}$ &
$	\frac{\pi }{8}+\frac{181 \Gamma^{16} \left(\frac{1}{8}\right)
		\Gamma^{16} \left(\frac{3}{8}\right)}{173173081374720 \pi
		^{15}}$ & $\frac{\pi }{8}+\frac{891 \Gamma^{48}
		\left(\frac{1}{3}\right)}{150323855360 \sqrt[3]{2} \pi
		^{23}}$ & $\frac{\pi }{8}+\frac{13 \Gamma^{32}
		\left(\frac{1}{4}\right)}{42278584320 \pi ^{15}}$ \\
	8 & $\frac{\pi }{9}$ & $\frac{\pi }{9}+\frac{73 \left(24+17
		\sqrt{2}\right) \Gamma^{18} \left(\frac{1}{8}\right) \Gamma^{18}
		\left(\frac{3}{8}\right)}{6234230929489920
		\left(1+\sqrt{2}\right)^4 \pi ^{17}}$ & $\frac{\pi }{9}+\frac{9
		\sqrt{3} \Gamma^{54} \left(\frac{1}{3}\right)}{37580963840 \pi
		^{26}}$ & $\frac{\pi }{9}+\frac{\Gamma^{36}
		\left(\frac{1}{4}\right)}{84557168640 \pi ^{17}}$ \\
	9 & $\frac{\pi }{10}+\frac{\Gamma^{40}
		\left(\frac{1}{4}\right)}{3382286745600 \pi ^{19}}$ &
	$\frac{\pi }{10}+\frac{487 \Gamma^{20} \left(\frac{1}{8}\right)
		\Gamma^{20} \left(\frac{3}{8}\right)}{199495389743677440 \pi
		^{19}}$ & $\frac{\pi }{10}+\frac{243 \Gamma^{60}
		\left(\frac{1}{3}\right)}{4810363371520\ 2^{2/3} \pi
		^{29}}$ & $\frac{\pi }{10}+\frac{\Gamma^{40}
		\left(\frac{1}{4}\right)}{1691143372800 \pi ^{19}}$ \\
	10 & $\frac{\pi }{11}$ &$ \frac{\pi }{11}+\frac{211 \left(58+41
		\sqrt{2}\right) \Gamma^{22} \left(\frac{1}{8}\right) \Gamma^{22}
		\left(\frac{3}{8}\right)}{35111188594887229440
		\left(1+\sqrt{2}\right)^5 \pi ^{21}}$ & $\frac{\pi
	}{11}+\frac{243 \sqrt{3} \Gamma^{66}
		\left(\frac{1}{3}\right)}{423311976693760 \sqrt[3]{2} \pi
		^{32}}$ & $\frac{\pi }{11}+\frac{13 \Gamma^{44}
		\left(\frac{1}{4}\right)}{297641233612800 \pi ^{21}}$ \\
	\hline		
	\end{tabular}
\end{table}

\begin{table}[t]
	\caption{
		\it{The values of $S_{p+2}^{(p)}(\frac ix)$ calculated by \eqref{eq:recursive} } }
	\label{tab:4a}
	\centering
	\begin{tabular}{|l|c|c|c|c|}
		\hline
		$p$ & ${S}_{p+2}^{(p)}(i)$ & ${S}_{p+2}^{(p)}(i/\sqrt{2})$ & ${S}_{p+2}^{(p)}(i/\sqrt{3})$& ${S}_{p+2}^{(p)}(i/2)$\\ \hline\hline
 0 & $\pi$  & $\pi -\frac{\Gamma^2  \left(\frac{1}{8}\right)\Gamma^2
	\left(\frac{3}{8}\right)}{48 \sqrt{2} \pi }$ & $\pi
-\frac{\sqrt{3} \Gamma^6 \left(\frac{1}{3}\right)}{16\ 2^{2/3}
	\pi ^2}$ & $\pi -\frac{\Gamma^4 \left(\frac{1}{4}\right)}{16 \pi
}$ \\
1 & $\frac{\pi }{2}+\frac{\Gamma^8 \left(\frac{1}{4}\right)}{384
	\pi ^3}$ & $\frac{\pi }{2}+\frac{\Gamma^4
	\left(\frac{1}{8}\right) \Gamma^4
	\left(\frac{3}{8}\right)}{1024 \pi ^3}$ & $\frac{\pi
}{2}+\frac{3 \Gamma^{12} \left(\frac{1}{3}\right)}{256
	\sqrt[3]{2} \pi ^5}$ & $\frac{\pi }{2}+\frac{\Gamma^8
	\left(\frac{1}{4}\right)}{192 \pi ^3}$ \\
2 & $\frac{\pi }{3}$ & $\frac{\pi }{3}-\frac{\Gamma^6
	\left(\frac{1}{8}\right) \Gamma^6
	\left(\frac{3}{8}\right)}{24576 \sqrt{2} \pi ^5}$ & $\frac{\pi
}{3}-\frac{\sqrt{3} \Gamma^{18} \left(\frac{1}{3}\right)}{2048
	\pi ^8}$ & $\frac{\pi }{3}-\frac{\Gamma^{12}
	\left(\frac{1}{4}\right)}{3072 \pi ^5}$ \\
3 & $\frac{\pi }{4}+\frac{\Gamma^{16}
	\left(\frac{1}{4}\right)}{49152 \pi ^7}$ & $\frac{\pi
}{4}+\frac{5 \Gamma^8 \left(\frac{1}{8}\right) \Gamma^8
	\left(\frac{3}{8}\right)}{3145728 \pi ^7}$ & $\frac{\pi
}{4}+\frac{9 \Gamma^{24} \left(\frac{1}{3}\right)}{65536\
	2^{2/3} \pi ^{11}}$ & $\frac{\pi }{4}+\frac{\Gamma^{16}
	\left(\frac{1}{4}\right)}{49152 \pi ^7}$ \\
4 & $\frac{\pi }{5}$ & $\frac{\pi }{5}-\frac{\Gamma^{10}
	\left(\frac{1}{8}\right) \Gamma^{10}
	\left(\frac{3}{8}\right)}{62914560 \sqrt{2} \pi ^9}$ &
$\frac{\pi }{5}-\frac{9 \sqrt{3} \Gamma^{30}
	\left(\frac{1}{3}\right)}{2621440 \sqrt[3]{2} \pi ^{14}}$
& $\frac{\pi }{5}-\frac{\Gamma^{20}
	\left(\frac{1}{4}\right)}{983040 \pi ^9}$ \\
5 & $\frac{\pi }{6}+\frac{\Gamma^{24}
	\left(\frac{1}{4}\right)}{23592960 \pi ^{11}}$ & $\frac{\pi
}{6}+\frac{23 \Gamma^{12} \left(\frac{1}{8}\right) \Gamma^{12}
	\left(\frac{3}{8}\right)}{12079595520 \pi ^{11}}$ & $
\frac{\pi }{6}+\frac{9 \Gamma^{36}
	\left(\frac{1}{3}\right)}{10485760 \pi ^{17}}$ & $\frac{\pi
}{6}+\frac{\Gamma^{24} \left(\frac{1}{4}\right)}{11796480 \pi
	^{11}}$ \\
6 & $\frac{\pi }{7}$ & $\frac{\pi }{7}-\frac{29 \Gamma^{14}
	\left(\frac{1}{8}\right) \Gamma^{14}
	\left(\frac{3}{8}\right)}{676457349120 \sqrt{2} \pi
	^{13}}$ & $\frac{\pi }{7}-\frac{27 \sqrt{3} \Gamma^{42}
	\left(\frac{1}{3}\right)}{1174405120\ 2^{2/3} \pi ^{20}}$
& $\frac{\pi }{7}-\frac{\Gamma^{28}
	\left(\frac{1}{4}\right)}{440401920 \pi ^{13}}$ \\
7 & $\frac{\pi }{8}+\frac{13 \Gamma^{32}
	\left(\frac{1}{4}\right)}{42278584320 \pi ^{15}}$ &
$\frac{\pi }{8}+\frac{181 \Gamma^{16} \left(\frac{1}{8}\right)
	\Gamma^{16} \left(\frac{3}{8}\right)}{173173081374720 \pi
	^{15}}$ & $\frac{\pi }{8}+\frac{891 \Gamma^{48}
	\left(\frac{1}{3}\right)}{150323855360 \sqrt[3]{2} \pi
	^{23}}$ & $\frac{\pi }{8}+\frac{13 \Gamma^{32}
	\left(\frac{1}{4}\right)}{42278584320 \pi ^{15}}$ \\
8 & $\frac{\pi }{9}$ & $\frac{\pi }{9}-\frac{73 \left(24+17
	\sqrt{2}\right) \Gamma^{18} \left(\frac{1}{8}\right) \Gamma^{18}
	\left(\frac{3}{8}\right)}{6234230929489920
	\left(1+\sqrt{2}\right)^4 \pi ^{17}}$ & $\frac{\pi }{9}-\frac{9
	\sqrt{3} \Gamma^{54} \left(\frac{1}{3}\right)}{37580963840 \pi
	^{26}}$ & $\frac{\pi }{9}-\frac{\Gamma^{36}
	\left(\frac{1}{4}\right)}{84557168640 \pi ^{17}}$ \\
9 & $\frac{\pi }{10}+\frac{\Gamma^{40}
	\left(\frac{1}{4}\right)}{3382286745600 \pi ^{19}}$ &
$\frac{\pi }{10}+\frac{487 \Gamma^{20} \left(\frac{1}{8}\right)
	\Gamma^{20} \left(\frac{3}{8}\right)}{199495389743677440 \pi
	^{19}}$ & $\frac{\pi }{10}+\frac{243 \Gamma^{60}
	\left(\frac{1}{3}\right)}{4810363371520\ 2^{2/3} \pi
	^{29}}$ & $\frac{\pi }{10}+\frac{\Gamma^{40}
	\left(\frac{1}{4}\right)}{1691143372800 \pi ^{19}}$ \\
10 & $\frac{\pi }{11}$ & $\frac{\pi }{11}-\frac{211 \left(58+41
	\sqrt{2}\right) \Gamma^{22} \left(\frac{1}{8}\right) \Gamma^{22}
	\left(\frac{3}{8}\right)}{35111188594887229440
	\left(1+\sqrt{2}\right)^5 \pi ^{21}}$ & $\frac{\pi
}{11}-\frac{243 \sqrt{3} \Gamma^{66}
	\left(\frac{1}{3}\right)}{423311976693760 \sqrt[3]{2} \pi
	^{32}}$ & $\frac{\pi }{11}-\frac{13 \Gamma^{44}
	\left(\frac{1}{4}\right)}{297641233612800 \pi ^{21}}$ \\
		\hline		
	\end{tabular}
\end{table}

\begin{table}[t]
	\caption{
		\it{The values of $S_{p+2}^{(p)}\left(\frac{1+ix}{2}\right)$ calculated by \eqref{eq:recursive}} }
	\label{tab:6}
	\centering
	\begin{tabular}{|l|c|c|c|c|}
		\hline
		$p$ & ${S}_{p+2}^{(p)}\left(\frac{1+i}{2}\right)$ & ${S}_{p+2}^{(p)}\left(\frac{1+i\sqrt{2}}{2}\right)$ & ${S}_{p+2}^{(p)}\left(\frac{1+i\sqrt{3}}{2}\right)$& ${S}_{p+2}^{(p)}\left(\frac{1+2i}{2}\right)$\\ \hline\hline
		0& $\pi$  & $\pi+\frac{(2\sqrt{2}-3)	\Gamma^2\left(\frac{1}{8}\right)\Gamma^2\left(\frac{3}{8}\right)}{96\pi}$ & $\pi$  & $\pi+\frac{(3-2\sqrt{2})\Gamma^4\left(\frac{1}{4}\right)}{32\pi}$ \\
		1 & $\frac{\pi}{2}-\frac{\Gamma^8\left(\frac{1}{4}\right)}{384	\pi^3}$  & $\frac{\pi}{2}-\frac{(\sqrt{2}-1)\Gamma^4\left(\frac{1}{8}\right)\Gamma^4\left(\frac{3}{8}\right)}{1024\pi^3}$ & $\frac{\pi}{2}$  & $\frac{\pi}{2}+\frac{(5-3\sqrt{2})\Gamma^8\left(\frac{1}{4}\right)}{768\pi^3}$ \\
		2 & $\frac{\pi}{3}$ & $\frac{\pi}{3}+\frac{(2\sqrt{2}-1)\Gamma^6\left(\frac{1}{8}\right)\Gamma^6\left(\frac{3}{8}\right)}{49152\pi^5}$ & $\frac{\pi}{3}+\frac{\sqrt{3}\Gamma^{18}\left(\frac{1}{3}\right)}{2048\pi^8}$  & $\frac{\pi}{3}-\frac{(\sqrt{2}-3)\Gamma^{12}\left(\frac{1}{4}\right)}{6144\pi^5}$ \\
		3 & $\frac{\pi}{4}+\frac{\Gamma^{16}\left(\frac{1}{4}\right)}{49152\pi^7}$ & $\frac{\pi}{4}+\frac{(5-2\sqrt{2})\Gamma^8\left(\frac{1}{8}\right)\Gamma^8\left(\frac{3}{8}\right)}{3145728\pi^7}$ & $\frac{\pi}{4}$  & $\frac{\pi}{4}+\frac{(4-3\sqrt{2})\Gamma^{16}\left(\frac{1}{4}\right)}{196608\pi^7}$ \\ 	
		4 & $\frac{\pi}{5}$ & $\frac{\pi}{5}+\frac{(2\sqrt{2}-7)\Gamma^{10}\left(\frac{1}{8}\right)\Gamma^{10}\left(\frac{3}{8}\right)}{125829120\pi^9}$ & $\frac{\pi}{5}	$  & $\frac{\pi}{5}+\frac{(12-5\sqrt{2})\Gamma^{20}\left(\frac{1}{4}\right)}{7864320\pi^9}$ \\ 	
		5 & $\frac{\pi}{6}-\frac{\Gamma^{24}\left(\frac{1}{4}\right)}{23592960\pi^{11}}$ & $\frac{\pi}{6}+\frac{(23-5\sqrt{2})\Gamma^{12}\left(\frac{1}{8}\right)\Gamma^{12}\left(\frac{3}{8}\right)}{12079595520\pi^{11}}$ & $\frac{\pi}{6}+\frac{9\Gamma^{36}\left(\frac{1}{3}\right)}{10485760\pi^{17}}$  & $\frac{\pi}{6}+\frac{(40-9\sqrt{2})\Gamma^{24}\left(\frac{1}{4}\right)}{377487360\pi^{11}}$ \\
		6 & $\frac{\pi }{7}$ & $\frac{\pi }{7}+\frac{\left(-17+58
			\sqrt{2}\right) \Gamma^{14} \left(\frac{1}{8}\right) \Gamma^{14}
			\left(\frac{3}{8}\right)}{1352914698240 \pi ^{13}}$ &
		$\frac{\pi }{7}$ & $\frac{\pi }{7}+\frac{\left(24-23
			\sqrt{2}\right) \Gamma^{28}
			\left(\frac{1}{4}\right)}{7046430720 \pi ^{13}}$ \\
		7 & $\frac{\pi }{8}+\frac{13 \Gamma^{32}
			\left(\frac{1}{4}\right)}{42278584320 \pi ^{15}}$ &
		$\frac{\pi }{8}+\frac{\left(181-196 \sqrt{2}\right) \Gamma^{16}
			\left(\frac{1}{8}\right) \Gamma^{16}
			\left(\frac{3}{8}\right)}{173173081374720 \pi ^{15}}$ &
	$\frac{\pi }{8}$ & $\frac{\pi }{8}+\frac{\left(416-147
			\sqrt{2}\right) \Gamma^{32}
			\left(\frac{1}{4}\right)}{1352914698240 \pi ^{15}}$ \\
		8 & $\frac{\pi }{9}$ & $\frac{\pi }{9}+\frac{\left(733+526
			\sqrt{2}\right) \Gamma^{18} \left(\frac{1}{8}\right) \Gamma^{18}
			\left(\frac{3}{8}\right)}{6234230929489920
			\left(1+\sqrt{2}\right)^4 \pi ^{17}}$ & $\frac{\pi }{9}+\frac{9
			\sqrt{3} \Gamma^{54} \left(\frac{1}{3}\right)}{37580963840 \pi
			^{26}}$ & $\frac{\pi }{9}+\frac{\left(192-65 \sqrt{2}\right)
			\Gamma^{36} \left(\frac{1}{4}\right)}{10823317585920 \pi
			^{17}}$ \\
		9 & $\frac{\pi }{10}-\frac{\Gamma^{40}
			\left(\frac{1}{4}\right)}{3382286745600 \pi ^{19}}$ &
		$\frac{\pi }{10}+\frac{\left(6671+4705 \sqrt{2}\right) \Gamma^{20}
			\left(\frac{1}{8}\right) \Gamma^{20}
			\left(\frac{3}{8}\right)}{199495389743677440
			\left(1+\sqrt{2}\right)^4 \pi ^{19}}$ & $\frac{\pi }{10}$ &
		$\frac{\pi }{10}+\frac{\left(640-489 \sqrt{2}\right) \Gamma^{40}
			\left(\frac{1}{4}\right)}{865865406873600 \pi ^{19}}$ \\
		10 & $\frac{\pi }{11}$ & $\frac{\pi }{11}-\frac{\left(32391+22921
			\sqrt{2}\right) \Gamma^{22} \left(\frac{1}{8}\right) \Gamma^{22}
			\left(\frac{3}{8}\right)}{35111188594887229440
			\left(1+\sqrt{2}\right)^5 \pi ^{21}}$ & $\frac{\pi }{11}$ &
		$\frac{\pi }{11}+\frac{\left(4992-929 \sqrt{2}\right) \Gamma^{44}
			\left(\frac{1}{4}\right)}{76196155804876800 \pi ^{21}}$ \\ \hline		
	\end{tabular}
\end{table}

\begin{table}[t]
	\caption{
		\it{The values of $S_{q}^{(p)}\left(i\right)$ calculated by \eqref{eq:recursive} } }
	\label{tab:10a}
	\centering
	\begin{tabular}{|c|c|c|c|c|}
		\hline
	$p $\textbackslash $q$	& $p+2$ & $p+4$ & $p+6$ & $p+8$\\ \hline
0 &$ \pi$  & $\frac{\Gamma^8 \left(\frac{1}{4}\right)}{960 \pi ^2}$
& 0&$ \frac{\Gamma^{16} \left(\frac{1}{4}\right)}{2150400 \pi ^4}$\\
1 & $\frac{\pi }{2}+\frac{\Gamma^8 \left(\frac{1}{4}\right)}{384
	\pi ^3}$ & 0 & $\frac{\Gamma^{16}
	\left(\frac{1}{4}\right)}{645120 \pi ^5}$ & 0 \\
2 & $\frac{\pi }{3}$ & $\frac{\Gamma^{16}
	\left(\frac{1}{4}\right)}{184320 \pi ^6}$ & 0 & $\frac{\Gamma^{24} \left(\frac{1}{4}\right)}{743178240 \pi
	^8} $\\
3 & $\frac{\pi }{4}+\frac{\Gamma^{16}
	\left(\frac{1}{4}\right)}{49152 \pi ^7}$ & 0 &
$\frac{\Gamma^{24} \left(\frac{1}{4}\right)}{247726080 \pi ^9}$ & 0
\\
4 & $\frac{\pi }{5}$ & $\frac{\Gamma^{24}
	\left(\frac{1}{4}\right)}{82575360 \pi ^{10}}$ & 0 & $\frac{13 \Gamma ^{32}\left(\frac{1}{4}\right)}{2615987404800
	\pi ^{12}}$\\
5 & $\frac{\pi }{6}+\frac{\Gamma^{24}
	\left(\frac{1}{4}\right)}{23592960 \pi ^{11}}$ & 0 &
$\frac{\Gamma^{32} \left(\frac{1}{4}\right)}{59454259200 \pi^{13}}$ & 0\\
6 & $\frac{\pi }{7}$ & $\frac{\Gamma^{32}
	\left(\frac{1}{4}\right)}{15854469120 \pi^{14}}$ & 0 & $\frac{31 \Gamma^{40}
	\left(\frac{1}{4}\right)}{2176501520793600 \pi ^{16}}$ \\
7 & $\frac{\pi }{8}+\frac{13 \Gamma^{32}
	\left(\frac{1}{4}\right)}{42278584320 \pi ^{15}}$ & 0 &
$\frac{\Gamma^{40} \left(\frac{1}{4}\right)}{23917599129600 \pi
	^{17}}$ & 0 \\
8 & $\frac{\pi }{9}$ & $\frac{19 \Gamma^{40}
	\left(\frac{1}{4}\right)}{167423193907200 \pi ^{18}}$ & 0 & $\frac{773 \Gamma^{48}
	\left(\frac{1}{4}\right)}{14626090219732992000 \pi ^{20}}$
\\
9 & $\frac{\pi }{10}+\frac{\Gamma^{40}
	\left(\frac{1}{4}\right)}{3382286745600 \pi ^{19}}$ & 0 &
$\frac{29 \Gamma^{48}
	\left(\frac{1}{4}\right)}{162512113552588800 \pi ^{21}}$ & 0
\\
10 & $\frac{\pi }{11}$ & $\frac{31 \Gamma^{48}
	\left(\frac{1}{4}\right)}{46432032443596800 \pi ^{22}}$ &
0  & $\frac{809 \Gamma^{56}
	\left(\frac{1}{4}\right)}{5304395386356498432000 \pi
	^{24}}$\\	
		\hline		
	\end{tabular}
\end{table}

\begin{table}[t]
	\caption{
		\it{The values of $S_{q}^{(p)}\left(\frac{1+i\sqrt{3}}{2}\right)$ calculated by \eqref{eq:recursive} } }
	\label{tab:10b}
	\centering
	\begin{tabular}{|c|c|c|c|c|}
		\hline
		$p$ \textbackslash $q$	& $p+2$ & $p+4$ & $p+6$ & $p+8$ \\ \hline
 0 & $\pi$  & 0 & $\frac{3 \sqrt{3} \Gamma^{18}
	\left(\frac{1}{3}\right)}{71680 \pi ^6}$ & 0\\
1 & $\frac{\pi }{2}$ & $\frac{3 \sqrt{3} \Gamma^{18}
	\left(\frac{1}{3}\right)}{20480 \pi ^7}$ & 0 & 0 \\
2 & $\frac{\pi }{3}+\frac{\sqrt{3} \Gamma^{18}
	\left(\frac{1}{3}\right)}{2048 \pi ^8}$ & 0 & 0 & $\frac{9 \Gamma^{36} \left(\frac{1}{3}\right)}{734003200 \pi
	^{14}} $\\
3 & $\frac{\pi }{4}$ & 0 & $\frac{27 \Gamma^{36}
	\left(\frac{1}{3}\right)}{587202560 \pi ^{15}}$ & 0 \\
4 & $\frac{\pi }{5}$ & $\frac{27 \Gamma^{36}
	\left(\frac{1}{3}\right)}{146800640 \pi ^{16}}$ & 0 & 0 \\
5 & $\frac{\pi }{6}+\frac{9 \Gamma^{36}
	\left(\frac{1}{3}\right)}{10485760 \pi ^{17}}$ & 0 & 0 & $\frac{27 \sqrt{3} \Gamma^{54}
	\left(\frac{1}{3}\right)}{6614249635840 \pi ^{23}}$ \\
6 & $\frac{\pi }{7}$ & 0 & $\frac{243 \sqrt{3} \Gamma^{54}
	\left(\frac{1}{3}\right)}{16535624089600 \pi ^{24}}$ & 0 \\
7 & $\frac{\pi }{8}$ & $\frac{81 \sqrt{3} \Gamma^{54}
	\left(\frac{1}{3}\right)}{1503238553600 \pi ^{25}}$ & 0 & 0  \\
8 & $\frac{\pi }{9}+\frac{9 \sqrt{3} \Gamma
	\left(\frac{1}{3}\right)^{54}}{37580963840 \pi ^{26}}$ & 0 & 0 & $\frac{65853 \Gamma^{72}
	\left(\frac{1}{3}\right)}{15408555951652864000 \pi ^{32}}$
\\
9 & $\frac{\pi }{10}$ & 0 & $\frac{12393 \Gamma^{72}
	\left(\frac{1}{3}\right)}{770427797582643200 \pi ^{33}}$ & 0
\\
10 & $\frac{\pi }{11}$ & $\frac{729 \Gamma^{72}
	\left(\frac{1}{3}\right)}{11006111394037760 \pi ^{34}} $ & 0  & 0\\		
		\hline		
	\end{tabular}
\end{table}

\end{landscape}

\section{Discussion and conclusion}
\label{sec:conc}
In the present paper, we systematically describe the $p$-analytic lattice sums \eqref{eq:Spq}. In particular, the computationally effective formula \eqref{eq:eps4ca} is derived. The expressions \eqref{s1krecurs}--\eqref{s1krecurshex} of the $p$-analytic lattice sums ${S}_{q}^{(1)}$ through the classic lattice sums $S_{q}$ are established.
Theorem \ref{th:polyn}  about polynomial representations is constructive and is the source of exact formulae with the number $\pi$ selected in the tables.

The obtained formulae have the fundamental applications in the theory of fibrous composites since the effective properties of composites are expressed in terms of the lattice sums \cite{GMN}. The values $(p+1)S_{p+2}^{(p)}$ are coefficients in the second order expansions in concentration of inclusions of effective tensors. Moreover, they indicate on the macroscopic isotropy of composites \cite{Mit2012}. In particular, the  values  $S_{2} =\pi$ \cite{lordrayleigh} and $2S_{3}^{(1)} =\pi$  \cite{Yakubovich} correspond to the isotropic properties of two-dimensional structures. Roughly speaking, if we see the number $\pi$ in Tables \ref{tab:4}-\ref{tab:10b}, we can suggest that the corresponding structure is macroscopically isotropic.

We shortly present the general problem of isotropy for conducting composites in terms of $S_2$. Let the set of points $\{a_k,\;k=1,2,\ldots,N\}$ lying in the unit square $\{-\frac 12<\mathrm{Re}z <\frac 12, -\frac 12< \mathrm{Im}z<\frac 12\}$ be symmetric with respect to the coordinate axes. Then,
\begin{equation}
\label{eq:e2pi}
e_2=\frac{1}{N^2}\sum_{k=1}^N \sum_{m=1}^N  E_2(a_k-a_m) = \pi.
\end{equation}
Here, $E_2(z)$ is the Eisenstein function of second order \cite{Weil} and $E_2(a_k-a_m): = S_2=\pi$ for $a_k=a_m$. %The condition \eqref{eq:e2pi} was justified in \cite{Mit1999, Mit2012} by means of the Keller reciprocity relation.
This relation can be considered as a necessary condition for macroscopic isotropy of conducting composites governed by Laplace's equation \cite{Mit1999, Mit2012}. It was systematically applied in \cite{rylko1, rylko2, rylko3} to investigate heterogeneous media. The theory of lattice sums for polyanalytic functions developed in the present paper can be useful for further investigations of composites.

%\textcolor{red}{to chyba jako hippoteze} \textcolor{blue}{Tu mamy ciekawa zaleznosc. W sec.3  pokazalismy ze kazda sume $S_q^{(1)}$ mozemy przedstawic jako kombinajce liniowa sum $S_j$. Z dowodu twierdzen 1 i 2 wynika ze kazda sume $\sum_{n}n^p\sum_m\frac{1}{(m+n\tau)^{p+q}}$  mozemy przedstawic jako pewna pochodna sumy $S_q$ ktora jest tez suma postaci $S_j$. Daje to wniosek, ze kazda suma $S_q^{(p)}$ jest to  pewna kombnacja liniowa funkcji typu $S_j$. Zatem functions $S_2$, $S_4$, and $S_6$ are podstawowa, a funkcje $S_j$ sa funkcjami bazowymi dla funkcji $ S_q^{(p)}$  (algebraic and the series which is a derivative). $S_q^{(1)}$ is an algebraic combination of $S_q$, see\eqref{s1krecurshex}. Rodzi to hipoteze ze kazda uogolniona funkcja Eisensteina $E_q^{(p)}$ da sie przedstawic jako kombinaja liniowa funkcji $E_2$ i jej pochodnych. (Co ciekawe dla $E_q^{(1)}$ tak jest - Journal of Solid...) }

%Zucker (1979) $\sum_{k} k^{t}\frac{d^{q-p+t}}{d\tau^{q-p+t}}\varepsilon_0(k\tau)$

\section*{Acknowledgements}
We thank Prof. John Zucker for consultations during the preparation of the present paper.
We would like to acknowledge the Interdisciplinary Centre for Computational Modelling in the University of Rzeszow for the possibility of performing computations (the computational grant: G-001/2017). The paper was partially supported by National Science Centre, Poland, Research Project No. 2016/21/B/ST8/01181.

\section*{Appendix}
Below, we give the Mathematica codes to compute $S_{q}^{(p)}(ix)$ and  $S_{q}^{(p)}\left(\frac{1+ix}{2}\right)$.
\begin{lstlisting}[language=Mathematica,caption={Code for $S_{q}^{(p)}(ix)$ }]
ReplaceKp = {K'[k] -> (E[k] - (1 - k^2) K[k])/(k (1 - k^2))};
ReplaceEp = {EE'[k] -> (E[k] - K[k])/k};

xprim[k_] := -(\[Pi]/( 2 k (1 - k^2) (K[k])^2));

V[2, k_] := -(4/3) (2 - k^2) (K[k])^2 + 4 K[k] E[k];
V[4, k_] := 16/45 (1 - k^2 + k^4) (K[k])^4;
V[6, k_] :=  64/945 ((k^2 - 2) (2 k^2 - 1) (k^2 + 1)) (K[k])^6;
V[l_, k_] := 3/((l + 1) (l - 1) (l/2 - 3)) Sum[(2 n-1)(l-2n-1)V[2 n, k] V[l-2n, k],
{n,2,l/2 -2}];
Vorth[st_, qminusp_, k_] :=
If[st == 0, V[qminusp, k] // Simplify,
-1/(st +qminusp- 1) I D[Vorth[st - 1, qminusp, k], k]/xprim[k] /. ReplaceKp /. ReplaceEp //Simplify];

Sorth[p_, q_, k_] := Simplify[(K[kp]/K[k])^((q - p)/2) (V[q - p, k])\\
 - 2 I (K[kp]/K[k])^((q - p)/2) (Sum[Sum[(-1)^(t-s) Binomial[p, s] Binomial[p - s, t - s] (I)^(t - s) Im[I^s]  (K[kp]/K[k])^t,  {s,1,t}]  Vorth[t,  q-p, k]), {t,1,p}]];

\end{lstlisting}

\begin{lstlisting}[language=Mathematica,caption={Code for $S_{q}^{(p)}\left(\frac{1+ix}{2}\right)$ }]
Vh[2, k_] := -(4/3) (5 - 4 k^2) (K[k])^2 + 8 K[k] E[k];
Vh[4, k_] := 16/45 (16 k^4 - 16 k^2 + 1) (K[k])^4;
Vh[6, k_] :=  128/945 ((2 k^2 - 1) (32 k^4 - 32 k^2 - 1)) (K[k])^6;
Vh[l_, k_] := 3/((l + 1) (l - 1) (l/2 - 3)) Sum[(2 n-1)(l-2n-1)Vh[2 n, k]
					Vh[l-2n, k], {n,2,l/2 -2}];
Vhex[st_, qminusp_, k_] :=
If[st == 0, Vh[qminusp, k] // Simplify,
-1/(st + qminusp - 1) I/2 D[Vhhex[st - 1, qminusp, k], k]/xprim[k] /. ReplaceKp /. ReplaceEp //Simplify];

Shex[p_, q_, k_] := Simplify[(K[kp]/(2 K[k]))^((q - p)/2) (Vh[q - p, k]) - 2 I (K[kp]/(2 K[k]))^((q - p)/2) (Sum[Sum[(-1)^(t-s)Binomial[p, s] Binomial[p - s, t - s] ((1+I (K[kp]/ K[k]))/2)^(t - s)(2^(-s))Sum[Binomial[s,j] Im[I^j](K[kp]/ K[k])^j,   {j,0,s}],  {s,1,t}]  Vhex[t,  q-p, k]), {t,1,p}]];

\end{lstlisting}

%\lstset{style=mystyle}
%\begin{footnotesize}
%	\lstinputlisting[language=Mathematica, firstline=3,
%	lastline=90,caption=Python example]{yakubowitzfinalsqr.nb}
%\end{footnotesize}
%\begin{lstlisting}[language=Mathematica]
%n = int(input("Podaj liczbe naturalna: "))
%s = 0
%for j in range(1, n + 1):
%print(" poziom ", j, " : ", s, "+", j, " = ")
%s = s + j
%print(s)
%print("Suma liczb od 1 do", n, "wynosi", s)
%\end{lstlisting}

%\bibliography{mybibfile}

\end{document}